\documentclass[11pt]{amsart}
\usepackage{graphicx}
\usepackage{graphics}
\usepackage{amsmath}
\usepackage{amscd}
\usepackage{latexsym}
\usepackage[all]{xy}
\begin{document}
\textwidth 5.5in
\textheight 8.3in
\evensidemargin .75in
\oddsidemargin.75in

\newtheorem{lem}{Lemma}[section]
\newtheorem{fact}[lem]{Fact}
\newtheorem{conj}[lem]{Conjecture}
\newtheorem{defn}[lem]{Definition}
\newtheorem{thm}[lem]{Fact}
\newtheorem{cor}[lem]{Corollary}
\newtheorem{prob}[lem]{Problem}
\newtheorem{claim}[lem]{Claim}
\newtheorem{main}{Theorem}
\newtheorem{exm}[lem]{Example}
\newtheorem{rmk}[lem]{Remark}
\newtheorem{que}[lem]{Question}
\newtheorem{prop}[lem]{Proposition}
\newtheorem{clm}[lem]{Claim}
\newcommand{\p}[3]{\Phi_{p,#1}^{#2}(#3)}
\def\Z{\mathbb Z}
\def\C{\mathcal{C}}
\def\D{\mathcal{D}}
\def\P{\mathcal{P}}
\def\R{\mathbb R}
\def\g{\text{\normalfont{gal}}}
\def\odots{\reflectbox{\text{$\ddots$}}}
\newcommand{\tg}{\overline{g}}
\def\proof{{\bf Proof. }}
\def\ee{\epsilon_1'}
\def\ef{\epsilon_2'}
\title{Notes on Gompf's infinite order corks}
\author{Motoo Tange}
\thanks{The author is supported by JSPS KAKENHI Grant Number 26800031.}
\subjclass{57R55, 57R65}
\keywords{Infinite order cork}
\address{Institute of Mathematics, University of Tsukuba,
 1-1-1 Tennodai, Tsukuba, Ibaraki 305-8571, Japan}
\email{tange@math.tsukuba.ac.jp}
\date{\today}
\maketitle
\begin{abstract}
For any positive integer $n$ we give a ${\mathbb Z}^n$-cork with a ${\mathbb Z}^n$-effective embedding in a 4-manifold being homeomorphic to $E(n)$.
This means that a cork gives a subset ${\mathbb Z}^n$ in the differential structures on $E(n)$.
Further, we describe handle decompositions of the twisted doubles (homotopy $S^4$) of Gompf's infinite order corks and show that they are Gluck twists and log transforms of $S^4$.
\end{abstract}

\section{Introduction}
\label{intro}
\subsection{Twist and cork.}
\label{Corktwist}
Let $X$ be a smooth manifold and $Y$ a codimension 0 submanifold with a smooth embedding $i:Y\hookrightarrow X$.
Removing $Y$ from $X$ and regluing by a self-diffeomorphism $f:\partial Y\to \partial Y$, we obtain a new smooth manifold and denote the manifold by $X(i,Y,f)$ or simply $X(Y,f)$.
We call the map $f$ a {\it gluing map} or a {\it twist map}.
This operation is called a {\it twist} and denote it by $(Y,f)$.
If the gluing map $f$ extends to $Y$ as a diffeomorphism, then we call $(Y,f)$ a {\it trivial twist}.

Let $Y$ be a contractible 4-manifold.
We call a nontrivial twist $(Y,f)$ a {\it cork}.
Then the gluing map $f$ is called a {\it cork map}.
Corks play significant roles in studying exotic 4-manifolds.
`Exotic' means that the manifolds are homeomorphic but non-diffeomorphic each other.
In fact, the following fact is well-known.
\begin{thm}[\cite{Mat}, \cite{CFHS}]
\label{corkoriginal}
Let $X,X'$ be two closed simply connected exotic 4-manifolds.
Then there exists a cork $(C,\tau)$ such that $X'=X(C,\tau)$ and $\tau^2=e$.
\end{thm}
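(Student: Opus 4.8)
\proof
The plan is to promote the homeomorphism between $X$ and $X'$ to a smooth $5$-dimensional $h$-cobordism and then to concentrate, inside that $h$-cobordism, the entire failure of Freedman's topological $h$-cobordism theorem to hold smoothly into a single contractible piece; this is the content of \cite{CFHS} and \cite{Mat}, and I only indicate the main steps. First I would produce a smooth $h$-cobordism $W$ from $X$ to $X'$: since $X$ and $X'$ are homeomorphic, closed and simply connected, it is standard that they are smoothly $h$-cobordant --- by Wall's stabilization theorem $X\#k(S^2\times S^2)\cong X'\#k(S^2\times S^2)$ for some $k$, and such a stable diffeomorphism can be organized into an $h$-cobordism $W$. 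Fix one.

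Next I would put $W$ into middle-level form. Using a handle decomposition of $W$ rel $X\times\{0\}$ together with $\pi_1(W)=1$, one trades the $0$- and $1$-handles for higher-index handles and dualizes the same move at the top, reaching $W=(X\times[0,1])\cup(2\text{-handles})\cup(3\text{-handles})$ with equally many $2$- and $3$-handles. The middle level $M$ is then obtained from $X$ by surgery on a framed link and from $X'$ by surgery on the dual framed link; inside $M$ lie two families of embedded $2$-spheres, the belt spheres $A_i$ of the $2$-handles and the attaching spheres $B_j$ of the $3$-handles, and after handle slides the algebraic intersection matrix $(A_i\cdot B_j)$ may be taken to be the identity. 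A product structure on $W$ is precisely the ability to isotope the $A_i$ and $B_j$ inside $M$ so that they meet transversally in the pattern of the identity matrix, i.e.\ to cancel the handle pairs geometrically.

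The heart of the argument is to apply Freedman's topological $h$-cobordism theorem and then localize it. Since $\pi_1(W)=1$, $W$ is a topological product, and the geometric cancellation of the $A_i$ and $B_j$ in $M$ is achieved via Casson handles, that is, infinite towers of immersed Whitney disks. The crucial point is that all the extra, non-smoothable Whitney intersections can be pushed into one compact region, a neighborhood of which meets $M$ in a compact simply connected acyclic $4$-manifold --- hence, by Whitehead's theorem, a compact contractible $4$-manifold $A\subset M$. Over $M\setminus\text{int}\,A$ the cancellation is genuinely smooth, so $W$ restricts there to a smooth product; transporting $A$ to the two ends along these product structures yields contractible submanifolds $A_0\subset X$ and $A_1\subset X'$ together with a diffeomorphism $X\setminus\text{int}\,A_0\cong X'\setminus\text{int}\,A_1$. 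Therefore $X'=X(A_0,f)$ for the induced self-diffeomorphism $f$ of $\partial A_0$, and since $X$ and $X'$ are not diffeomorphic this twist is nontrivial, i.e.\ $(A_0,f)$ is a cork.

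It remains to upgrade $f$ to an involution, following \cite{Mat}: one enlarges $(A_0,f)$ to a cork $(C,\tau)$ with $A_0\subset C$, regluing $C\setminus\text{int}\,A_0$ by the identity so that $X(C,\tau)=X(A_0,f)=X'$, while $C$ is taken in a symmetric model (for instance a contractible handlebody with one $1$-handle and one $2$-handle, attached so that $\partial C$ carries an obvious orientation-preserving involution) and $\tau$ is that involution; then $\tau^2=e$. The step I expect to be hardest is the localization above: extracting a genuinely compact \emph{contractible} subregion from Freedman's proof and checking that $W$ is a smooth product off it. This is where Casson-handle theory enters and forms the technical core of \cite{CFHS}. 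Secondary points that still need care are the low- and high-index handle trading in the middle-level reduction and, in the last step, verifying that enlarging the cork does not change the ambient $4$-manifold.\hfill$\Box$
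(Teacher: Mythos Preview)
The paper does not give its own proof of this statement: it is recorded as a \emph{Fact} with citations to \cite{Mat} and \cite{CFHS}, and nothing further is argued. So there is no ``paper's proof'' to compare against; your sketch is precisely an outline of the cited references, and as such it is the intended argument.

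As an outline it is accurate. A couple of minor tightenings you may want: the existence of the $h$-cobordism is more directly Wall's theorem that simply connected closed $4$-manifolds with isomorphic intersection forms are smoothly $h$-cobordant (stabilization is a consequence, not the mechanism); and in the localization step the object one actually extracts is a contractible sub-$h$-cobordism $W_0\subset W$ with $W\setminus\mathrm{int}\,W_0$ a smooth product, whose two ends are the contractible $A_0\subset X$ and $A_1\subset X'$---your phrasing via the middle level $M$ is equivalent but slightly obscures this. The involution upgrade in \cite{Mat} is also more specific than ``enlarge to a symmetric model'': Matveyev arranges $A_0$ and $A_1$ to be diffeomorphic to a common $C$ built from a single $1$-handle/$2$-handle pair in a symmetric link pattern, so that swapping the dot and the $0$ is the involution $\tau$. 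None of this affects correctness; your identification of the hard step (compact contractible localization of the Casson-handle argument) is exactly right.
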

\subsection{Gompf's infinite order corks}
\label{GCorktwist}
Gompf in \cite{G} gave an infinite exotic family using infinite order corks as below.
Let $X$ be a certain 4-manifold (including a square zero torus with two vanishing cycles).
\begin{thm}[\cite{G}]
\label{Gompf}
Suppose that $K_n$ is the $2n$-twist knot.
Then there exists an infinite order cork $(C,f)$ satisfying $X_{K_n}=X(C,f^n)$.
\end{thm}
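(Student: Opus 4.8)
The plan is to reconstruct Gompf's argument by localizing Fintushel--Stern twist-knot surgery into a twist on a contractible piece. First I would fix a handle diagram of $X$ in which the cusp neighborhood $N$ (the square-zero torus $T$ together with its two vanishing cycles $a_1,a_2$) is drawn explicitly. The point of having \emph{two} vanishing cycles is that it makes visible an unknotted ``twisting circle'' $c$ lying in $N$ which is null-homotopic in $X\setminus c$, so that $\pm 1/n$-surgery on $c$ keeps $X$ simply connected with unchanged intersection form; by Freedman all the resulting manifolds are then homeomorphic to $X$. By the Fintushel--Stern description of knot surgery for twist knots (the knot $K_n$ is obtained from the unknot $U$ by $-1/n$-surgery on an unknotted $c$ bounding a disk meeting $U$ in two points, and this operation transports into the nucleus/cusp), this $\pm 1/n$-surgery on $c$ produces exactly the knot-surgered manifold $X_{K_n}$: the $n$ full twists on the two strands through $c$ create the $2n$-crossing twist region of $K_n$, with the clasp coming from how the strands close up. Thus $X_{K_n}$ is obtained from $X=X_{K_0}$ purely by an operation supported near $c$.

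Next I would isolate this local operation as a cork twist. Enlarging a small ball around $c$ together with the relevant arcs of $a_1,a_2$ to a compact codimension-$0$ submanifold $C\subset X$ carrying the surgery, a handle cancellation ($1$-handle/$2$-handle pair) in the induced diagram shows that $C$ is contractible. The effect of the $\pm 1/n$-surgery on $c$ is then to remove $C$ and reglue $X\setminus C$ by a self-diffeomorphism $f^n$ of $\partial C$, where $f$ is the ``single twist'' along the torus swept out by $c$ (a torus re-gluing of the contractible piece). By construction $X(C,f^n)=X_{K_n}$, which is the required identity.

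It remains to see that $(C,f)$ is a \emph{nontrivial}, indeed \emph{infinite order}, cork. Here I would quote the Fintushel--Stern formula $SW_{X_{K_n}}=SW_X\cdot\Delta_{K_n}(t)$ with $t=\exp(2[T])$, together with the fact that the Alexander polynomials $\Delta_{K_n}(t)=nt-(2n+1)+nt^{-1}$ of the $2n$-twist knots are pairwise distinct; hence the manifolds $X_{K_n}$, $n\ge 0$, are pairwise non-diffeomorphic although all homeomorphic to $X$. If some power $f^n$ with $n\ne 0$ were isotopic to $\mathrm{id}_{\partial C}$, regluing would not change the diffeomorphism type, giving $X_{K_n}\cong X=X_{K_0}$, a contradiction; applying this to $f^{n-m}$ shows moreover that no two distinct powers of $f$ are isotopic, so $f$ has infinite order in $\pi_0\,\mathrm{Diff}(\partial C)$ and $(C,f^n)$ gives infinitely many pairwise non-diffeomorphic twists. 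In particular $f$ does not extend over $C$, so $(C,f)$ is a cork.

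The main obstacle is the middle step: explicitly exhibiting the contractible local model $C$ and the diffeomorphism $f$, and verifying by Kirby calculus that regluing by $f^n$ reproduces precisely the $\pm 1/n$-twist on $c$ (hence $X_{K_n}$) while leaving the complement $X\setminus C$ untouched --- this requires carefully tracking the two vanishing cycles $a_1,a_2$ and all framing changes through the handle slides, and checking that the cancellation making $C$ contractible is compatible with the twisting region. Once the identity $X(C,f^n)=X_{K_n}$ is in place, the remaining conclusions (homeomorphism type via Freedman, non-diffeomorphism and infinite order via the Fintushel--Stern Seiberg--Witten computation) follow essentially formally.
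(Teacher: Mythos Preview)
The paper does not give its own proof of this statement: it is quoted as a Fact from \cite{G}, and the only related material is Section~2.2, where the author records a handle diagram of $C=C(r,s;m)$ as $(I\times P)\cup h_0$ (with $P$ the complement of the double twist knot $\kappa(r,-s)$), identifies the cork map $f$ as the Dehn twist $\times\,\mathrm{id}$ on $(I\times\partial\Sigma)\times S^1$, and asserts (again deferring to \cite{G}) that for $r=s=-m=1$ one has an embedding $C\subset V\subset X$ with $X(C,f^k)=X_{\kappa(k,-1)}$. So there is no proof in the paper to compare against.

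That said, your outline is a faithful reconstruction of Gompf's argument and matches the description the paper imports: the twist-knot surgery is realized by $\pm 1/n$-surgery on a suitable circle in a neighborhood carrying the torus and two vanishing cycles, this operation is packaged as a twist on a contractible $C$, and the Fintushel--Stern formula $SW_{X_{K_n}}=SW_X\cdot\Delta_{K_n}$ with $\Delta_{K_n}$ pairwise distinct shows $f$ has infinite order. Two small points of alignment with the paper's conventions: (i) the neighborhood used here is the Kodaira type~III fiber neighborhood $V$ rather than a cusp neighborhood, though both supply the needed pair of vanishing cycles; (ii) the explicit contractible model is not obtained by ``enlarging a ball around $c$'' but is given from the outset as $(I\times P)\cup h_0$, and the identification $X(C,f^n)=X_{K_n}$ is checked via the handle picture of $I\times\Sigma\times S^1$ in {\sc Figure}~\ref{SigmaS}. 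Your acknowledged ``main obstacle'' --- the Kirby-calculus verification that the regluing by $f^n$ reproduces the $\pm 1/n$-twist --- is exactly the content the paper cites from \cite{G} (and \cite{G3}) rather than reproving.
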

$X_K$ is the (Fintushel-Stern) knot-surgery of $X$ by $K$.
The {\it cork order} of a cork is defined to be the minimal positive number whose power of the twist map is a trivial twist.
Higher order corks are known to exist by \cite{T} and \cite{AKMR}.

\begin{defn}[$G$-effective embedding (defined in \cite{AKMR})]
Let $\mathcal{C}$ be a 4-manifold with boundary.
Let $G$ be a group acting on $\partial\C$ effectively.
If there exists an embedding $i$ of $\C$ into a 4-manifold $X$ such that $X(i,\C,g)$ is not diffeomorphic to $X(i,\C,g')$ for any distinct $g,g'\in G$,
then we call the embedding $i$ a $G$-effective.
\end{defn}

\subsection{Galaxy}
Here we give terminologies related to cork and cork twist.
These terminologies make it easier to understand our results.
Let $X$ be a smooth 4-manifold.
We call the set of exotic structures on $X$ {\it the galaxy} of $X$ and denote it by
$$\g(X).$$
Our interest is to understand some kind of structures on the set $\g(X)$.
Any cork twist can be regarded as some relationship among subsets of $\g(X)$.

Let $C$ be a contractible 4-manifold.
Let $G$ be a nontrivial subgroup in $\text{Diff}(\partial C)$.
Let $(C,G)$ be a $G$-cork and $C\hookrightarrow X$ a $G$-effective embedding.
Then the collection $S=\{X(C,g)|g\in G\}$ is a subset of $\g(X)$ with one to one correspondence $g\mapsto X(C,g)$.
We call such a subset $S$ a {\it ($G$-)constellation} and the embedding $G\overset{\cong}{\to} S\subset \g(X)$ a {\it constellation embedding}.

Fact~\ref{corkoriginal} means that a pair of every two points in $\g(X)$ is a ${\mathbb Z}_2$-constellation.
If $G\hookrightarrow \g(X)$ is a constellation with respect to a $G$-cork $(C,G)$, then any subgroup $e\neq H<G$ gives an $H$-constellation $H\hookrightarrow \g(X)$ with respect to an $H$-cork $(C,H)$.
We call this constellation {\it a subconstellation}.
The main theorem in \cite{T5} says that any infinite family in $\g(X)$ is not always a constellation.
\subsection{Results.}
In this subsecion, we digest the results (in Section~\ref{Zncks} and \ref{S4}) obtained in this article.

The first result (Theorem~\ref{Z2}) gives a construction of $\Z^n$-cork.
Furthermore, we show that this cork gives a ${\mathbb Z}^n$-constellation in $\g(E(n))$
by knot-surgeries on a single fibered knot.
This construction is due to an $n$-fold boundary sum of Gompf's $C$.

The twisted double ${\mathbb S}_{r,s,m,k}$ of Gompf's infinite order corks $(C,f)$ is given by Gompf in \cite{G}.
We investigate the diffeomorphism type of ${\mathbb S}_{r,s,m,k}$.
We prove ${\mathbb S}_{r,s,m,k}\cong{\mathbb S}_{r,s,m+2,k}$.
We prove ${\mathbb S}_{r,s,0,k}$ is diffeomorphic to the standard $S^4$
and ${\mathbb S}_{r,s,1,k}$ is the Gluck twists of $S^4$ along a 2-knot in $S^4$.
These are the second result (Theorem~\ref{mod2prop}).
The third result (Theorem~\ref{double2}) is
a log transform construction of ${\mathbb S}_{r,s,m,k}$.
As a result, ${\mathbb S}_{r,s,m,k}$ is a $(1/s)$-log transform (or $(-1/r)$-log transform) along a torus in $S^4$.
As a result, ${\mathbb S}_{r,s,m,k}$ is a homotopy $S^4$ having three kinds of constructions:
a cork twist, a Gluck twist and a log transform.

\subsection{$\Z^n$-corks.}
\label{Zncks}
In \cite{G} Gompf defined infinite order corks $(C,f)$ and asked in \cite{G} whether there exists a ${\mathbb Z}^2$-cork by taking the full $T^2$ action of his corks.
In \cite{G2} he partially gave a negative answer for this question.
We construct a ${\mathbb Z}^n$-cork below, but it is not an answer of this question.
\begin{main}
\label{Z2}
For any natural number $n$ there exists a ${\mathbb Z}^n$-cork $C_n$.
Furthermore, there exist a 4-manifold $X_n$ (homeomorphic to $E(n)$) and ${\mathbb Z}^n$-effective embedding $C_n\hookrightarrow X_n$.
This ${\mathbb Z}^n$-effective embedding gives a ${\mathbb Z}^n$-constellation ${\mathbb Z}^n\hookrightarrow \g(E(n))$.
\end{main}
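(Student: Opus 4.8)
The plan is to take $C_n$ to be the $n$-fold boundary connected sum of Gompf's cork, and to embed it into a manifold homeomorphic to $E(n)$ that carries $n$ disjoint, \emph{homologically independent} copies of Gompf's square-zero torus with two vanishing cycles. Let $(C,f)$ be Gompf's infinite order cork of Fact~\ref{Gompf} and set $C_n:=\natural^{n}C$. A boundary connected sum of contractible $4$-manifolds is contractible (van Kampen and Mayer--Vietoris), so $C_n$ is a contractible $4$-manifold with $\partial C_n=\#^{n}\partial C$. For $1\le i\le n$ let $f_i\in\text{Diff}(\partial C_n)$ act by $f$ on the $i$-th summand of $\partial C_n$ and as the identity elsewhere; the $f_i$ have disjoint supports, hence commute, and determine an action of $\Z^{n}$ on $\partial C_n$, with $k=(k_1,\dots,k_n)$ acting by $f^{k}:=f_1^{k_1}\cdots f_n^{k_n}$. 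Both the effectiveness of this action and the claim that $(C_n,\Z^n)$ is a $\Z^n$-cork (no nontrivial $f^{k}$ extends over $C_n$) will follow automatically from the distinguishing step below: if $f^{k}$ with $k\neq0$ extended over $C_n$, or were isotopic to the identity on $\partial C_n$, then $X_n(C_n,f^{k})$ would be diffeomorphic to $X_n$, contradicting the non-diffeomorphisms established there.

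The embedding is modelled on Gompf's embedding $C\hookrightarrow X$, in which $X$ contains a square-zero torus $T$ with two vanishing cycles and the cork twist realizes Fintushel--Stern knot surgery, $X(C,f^{m})=X_{K_m}$, with $K_m$ the $2m$-twist knot. I would assemble $X_n$ out of $n$ disjoint copies of the neighbourhood of $T$ in $X$ (each containing its copy of $C$) together with auxiliary pieces, arranging that the resulting simply connected $4$-manifold $X_n$ has the intersection form and Kirby--Siebenmann invariant of $E(n)$ --- so that $X_n$ is homeomorphic to $E(n)$ by Freedman's theorem --- and that the $n$ copies $T_1,\dots,T_n\subset X_n$ of $T$ remain disjoint square-zero tori, each carrying two vanishing cycles, with $\mathrm{PD}[T_1],\dots,\mathrm{PD}[T_n]$ linearly independent in $H_2(X_n;\mathbb Q)$; one should additionally keep $X_n$ of Seiberg--Witten simple type with $\mathcal{SW}_{X_n}\neq0$. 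Then $C_n$ embeds in $X_n$ as the disjoint union of the copies of $C$, with $f_i$ supported in the $i$-th neighbourhood.

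Because the $n$ copies of Gompf's local picture occupy pairwise disjoint regions of $X_n$, applying $f^{k}$ modifies each region independently, so Fact~\ref{Gompf} applied within the $i$-th region identifies
\[
X_n(C_n,f^{k})\;\cong\;\bigl(X_n\bigr)_{K_{k_1},\dots,K_{k_n}},
\]
the result of simultaneous Fintushel--Stern knot surgeries on $T_i$ with $K_{k_i}$; since knot surgery along tori with vanishing cycles preserves the homeomorphism type, each $X_n(C_n,f^{k})$ is homeomorphic to $E(n)$. Distinct $k$ then yield non-diffeomorphic manifolds by the knot-surgery formula together with the product formula for surgeries along disjoint tori: with $t_i=\exp(\mathrm{PD}[T_i])$,
\[
\mathcal{SW}_{X_n(C_n,f^{k})}\;=\;\Bigl(\,\prod_{i=1}^{n}\Delta_{K_{k_i}}(t_i)\Bigr)\cdot\mathcal{SW}_{X_n}.
\]
Since $\mathrm{PD}[T_1],\dots,\mathrm{PD}[T_n]$ are linearly independent, the $t_i$ are multiplicatively independent in the group ring of $H_2(X_n)/\text{torsion}$, and the twist knots $K_m$ have pairwise distinct (symmetrized, normalized) Alexander polynomials, nontrivial for $m\neq0$; hence $\mathcal{SW}_{X_n(C_n,f^{k})}$ determines the ordered tuple $k$. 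Thus $k\mapsto X_n(C_n,f^{k})$ is injective, which yields at once the $\Z^n$-constellation $\Z^{n}\hookrightarrow\g(E(n))$, the $\Z^n$-effectivity of $C_n\hookrightarrow X_n$, and the $\Z^n$-cork property of $(C_n,\Z^n)$.

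The main obstacle is the construction in the second step: producing a manifold homeomorphic to $E(n)$ that contains $n$ disjoint copies of Gompf's torus-with-two-vanishing-cycles in \emph{independent} homology classes. Independence is exactly what is needed. If instead one carried out Gompf's construction inside $n$ cusp neighbourhoods of a single elliptic fibration of $E(n)$, the tori $T_i$ would all be homologous to the fiber, the knot surgeries would see only the multiset $\{K_{k_i}\}$, so $X_n(C_n,f^{k})$ would depend only on $\{k_1,\dots,k_n\}$ (and an ambient symmetry permuting the summands would even realize the corresponding diffeomorphisms) --- the map $\Z^n\to\g(E(n))$ would be far from injective. One must therefore place the Gompf blocks along tori that are not proportional to a common fiber class, while still controlling the intersection form and Kirby--Siebenmann invariant so that the total space is homeomorphic to $E(n)$. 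A secondary point is that $b^+(E(1))=1$, so for $n=1$ the distinguishing argument must be run with the small-perturbation Seiberg--Witten invariant in the chamber determined by the torus class rather than with the ordinary invariant.
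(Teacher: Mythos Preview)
Your setup (taking $C_n=\natural^{n}C$, the commuting maps $f_i$, and reducing both the cork property and $\Z^n$-effectiveness to a Seiberg--Witten distinguishing argument) matches the paper exactly. The divergence is in how the twisted manifolds are distinguished, and here your proposal has a genuine gap that you yourself flag: you never construct the manifold $X_n$ carrying $n$ \emph{homologically independent} square-zero tori with two vanishing cycles each, and it is not clear such a manifold homeomorphic to $E(n)$ with nonzero Seiberg--Witten invariant exists. The natural source of such tori --- fibers of an elliptic fibration --- all lie in a single homology class, precisely the situation you correctly rule out in your last paragraph. Producing independent classes would require something like incommensurable fibrations or a delicate auxiliary surgery, and none of this is supplied; ``I would assemble $X_n$\dots'' is a wish list, not a construction.

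The paper sidesteps this problem entirely by building the $n$ independent parameters into the \emph{knot} rather than into the torus classes. It works with the single fiber torus of $E(k)$ and performs one Fintushel--Stern surgery by a connected sum $K(n_1,\dots,n_k)=K_{1,n_1}\#\cdots\#K_{k,n_k}$ of $2$-bridge knots, where the family $K_{i,n}$ depends on both indices and $\Delta_{i,n}$ has top degree $i+1$ when $n\neq0$. The $k$ copies of $C$ are embedded at the $k$ clasp regions of this connected-sum knot inside $X_k=E(k)_{K(0,\dots,0)}$, and twisting by $f_i$ changes only the parameter $n_i$. The Seiberg--Witten invariant becomes $SW_{E(k)}\cdot\prod_i\Delta_{i,n_i}(t)$ in a \emph{single} variable $t$; the combinatorial core of the argument (Claim~\ref{2.3}) is that a product of these polynomials, having distinct prescribed degree profiles indexed by $i$, determines the ordered tuple $(n_1,\dots,n_k)$. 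So where you wanted to separate the factors via independent cohomology variables $t_1,\dots,t_n$, the paper separates them by the degrees of the Alexander polynomials --- trading your unresolved geometric construction for an elementary polynomial argument.
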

Here $C_n$ is the boundary sum of $n$ copies of $C(1,1;-1)$ as defined in \cite{G}.
This construction is due to performing cork twisting at distinct two clasps as mentioned by Gompf in \cite{G}.
Here we give the following interesting questions:
\begin{que}
Is there a 4-manifold $X$ such that $\g(X)$ 
includes a ${\mathbb Z}^n$-constellation for every natural number $n$?
\end{que}
\begin{que}
Does there exist a 4-manifold $X$ with a $G$-constellation in $\g(X)$ for an infinite non-abelian group?
\end{que}
Related topics to this question will be written in a sequel.

\subsection{The twisted double of Gompf's $(C,f)$.}
\label{S4}
Let $C$ denote $C(r,s;m)$.
The double $D(C)=C\cup_{\text{id}}(-C)$ is the boundary of $C\times I=B$.
Since, $C$ is a Mazur type (consisting of one 1-handle and one 2-handle) contractible 4-manifold, $B$ is diffeomorphic to the 5-ball.
Because, the attaching sphere of the 5-dimensional 2-handle is $S^1$ in 4-space,
it depends only on the homotopy type of $S^1$.
Thus, the two handles are a canceling pair.
In particular, the double $D(C)$ is diffeomorphic to $S^4$.

The twisted double $C\cup_{f^k}(-C)=S^4(C,f^k)=:{\mathbb S}_{r,s,m,k}$ 
are all homotopy 4-spheres.
As proven above, since the untwisted double ${\mathbb S}_{r,s,m,0}$ is $S^4$.
The problem that we concern is the diffeomorphism type of a general ${\mathbb S}_{r,s,m,k}$.
\begin{conj}[\cite{G}]
Let $r,s,m$ be any integers with  $r,s>0>m$.
Let $k$ be a nonzero integer.
Then ${\mathbb S}_{r,s,m,k}$ is standard $S^4$.
\end{conj}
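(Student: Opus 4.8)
The plan is to derive Gompf's conjecture from the log-transform description of the twisted double provided by Theorem~\ref{double2}. That theorem lets us write ${\mathbb S}_{r,s,m,k}=C\cup_{f^k}(-C)$, with $C=C(r,s;m)$, as a log transform of $S^4$ along an embedded torus $T=T_{r,s,m,k}$ with trivial normal bundle; so the task is no longer to cancel all handles by hand but to show that this one log transform is a trivial twist. First I would trace $T$ through the proof of Theorem~\ref{double2}. Gompf's $C$ carries a $T^2$-action on its boundary and $f$ is a clasp-twist generating a $\Z\subset T^2$; in the double $S^4=C\cup(-C)$ the corresponding circle orbit, swept across the doubling, fills out the torus $T$, and the $f^k$-regluing becomes the log transform along $T$ whose gluing $\varphi\colon T^3=\partial\nu(T)\to T^3$ is the $k$-th power of the Dehn twist along $T^2\times\{\mathrm{pt}\}$ in the orbit direction.

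Next I would show that $T$ has a \emph{compressible direction}: one circle factor of $T^2$ is, by construction, a vanishing cycle of Gompf's torus, hence bounds a vanishing disk on one side of the double, and after handle moves this should become a properly embedded $2$-disk in $S^4\setminus\mathrm{int}\,\nu(T)$ bounded by the corresponding circle on $T^3$. Granting such a direction, the proof reduces to a mapping-class statement on $T^3$: after an isotopy of $\partial\nu(T)$ the gluing $\varphi$ is a power of the Dehn twist along an incompressible torus in $T^3$ whose core circle is exactly the compressible direction, and such a twist extends over $S^4\setminus\mathrm{int}\,\nu(T)$ by sliding its supporting collar across the compressing disk. Hence $\varphi$ extends to a self-diffeomorphism of the complement, the log transform is a trivial twist, and ${\mathbb S}_{r,s,m,k}\cong S^4$. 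As a cross-check I would run the handle calculus directly: double Gompf's diagram for $C(r,s;m)$ with a $k$-fold clasp twist, use the $(-C)$-side handles and the vanishing-cycle $2$-handles to cancel the cork's $1$-/$2$-handle pair and absorb the twisting, and verify that one is left with a $0$-handle and a $4$-handle.

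The hard part is the middle step: pinning down $\varphi$ and the complement precisely enough to conclude that $\varphi$ extends. The hypothesis $m<0$ is exactly what makes $(C,f)$ a \emph{nontrivial} cork, so the framing and linking contributed by the $m$-box must be followed carefully through the doubling, and it is not automatic that the compressible direction of $T^3$ survives it. Moreover a log transform along a torus in $S^4$ can a priori produce a nonstandard homotopy sphere of Gluck-twist type, so one genuinely needs the second clasp of Gompf's diagram — which should furnish a dual sphere or a section for $T$ — to kill the potentially exotic part of the twist; making this work is where the argument will live or die. I expect the case $r=s=1$ of Gompf's original cork to be cleanest, with general $r,s$ following from the same moves at the cost of bookkeeping, and the $k$-dependence to disappear once extension is established for $k=1$.
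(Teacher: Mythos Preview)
The statement you are trying to prove is labeled a \emph{Conjecture} in the paper, and the author states explicitly that ``we do not know whether ${\mathbb S}_{r,s,m,k}$ is standard or not.'' There is no proof in the paper to compare against; what the paper does contain is evidence that your proposed strategy runs into a genuine obstruction.

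Your plan is to use Theorem~\ref{double2} to realize ${\mathbb S}_{r,s,m,k}$ as a log transform of $S^4$ along a torus $T$, and then to trivialize that log transform by finding a compressing disk for one circle factor of $T$ in the complement---equivalently, by extending the torus neighborhood to a fishtail neighborhood. This is exactly the approach the author discusses in Section~\ref{S4} (``If the embedding of the torus in $S^4$ extends to a fishtail neighborhood, then the log transform does not change the diffeomorphism type \ldots\ It seems difficult to find a certain fishtail neighborhood in our examples'') and then analyzes more carefully in the subsection ``A remark for the curves $c_1$ and $c_2$.'' There the author pushes the candidate curve $c_i$ through the cobordism from $\#^2T^2\times S^1$ to $\#^3S^2\times S^1$ obtained by stripping off the top handles of the exterior, and shows that the resulting knot $\tilde{c}$ bounds no $(-1)$-framed disk in $\natural^3D^3\times S^1$: if it did, attaching a $0$-framed $2$-handle on $\tilde{c}$ would produce a $(-1)$-sphere in a manifold with intersection form $\langle 0\rangle$. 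This rules out the most direct way of producing the compressing disk you need.

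So the step you flag as ``the hard part'' is not merely hard bookkeeping---it is the entire open problem. Your assertion that ``one circle factor of $T^2$ is, by construction, a vanishing cycle of Gompf's torus, hence bounds a vanishing disk on one side of the double'' is where the argument breaks: the vanishing disks that make $(C,f)$ work as a cork inside an ambient $4$-manifold like $E(n)$ come from $2$-handles of that ambient manifold, not from $C$ itself, and in the bare double $C\cup_{f^k}(-C)$ there is no reason to expect them to survive. The author's computation shows that, at least along the obvious route, they do not. Unless you can produce a compressing disk by some genuinely different mechanism, your outline does not advance beyond what the paper already contains.
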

We will prove the following theorem on ${\mathbb S}_{r,s,m,k}$ in Section \ref{section3}.
\begin{main}
\label{mod2prop}
For any integer $m$, ${\mathbb S}_{r,s,m,k}\cong {\mathbb S}_{r,s,m+2,k}$ holds.
If $m=0$, then ${\mathbb S}_{r,s,0,k}$ is diffeomorphic to $S^4$. If $m=1$, then ${\mathbb S}_{r,s,1,k}$ is the Gluck twist along a 2-knot $K_{r,s,k}$ in $S^4$
\end{main}
We give another aspect of ${\mathbb S}_{r,s,m,k}$.
\begin{main}
\label{double2}
${\mathbb S}_{r,s,m,k}$ is a $(1/s)$-log transform of $S^4$ along an embedded torus.
\end{main}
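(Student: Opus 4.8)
The plan is to compute ${\mathbb S}_{r,s,m,k}$ from an explicit handle decomposition and to recognize it as a torus surgery on the standard $S^4$ which is precisely a $1/s$-log transform. Recall from \cite{G} that $C=C(r,s;m)$ is presented by a small handle diagram in which $r$, $s$, $m$ appear as framing and twisting parameters, and that the cork map $f\colon\partial C\to\partial C$ is, up to isotopy, a \emph{torus twist}: it is supported in a tubular neighborhood $N=T\times(-\epsilon,\epsilon)$ of an embedded torus $T\subset\partial C$, where it has the form $(x,t)\mapsto(x+\varphi(t)\gamma,t)$ with $\varphi$ running once from $0$ to $1$ and $\gamma\in H_1(T)$ the distinguished slope read off from the diagram; this torus twist is exactly what makes $f$ have infinite order. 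By Proposition~\ref{mod2prop} we may restrict to $m\in\{0,1\}$, which shortens the bookkeeping.

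The modification is local. Since the untwisted double $DC=C\cup_{\mathrm{id}}(-C)=\partial(C\times I)$ is the standard $S^4$, writing $DC=C\cup(\partial C\times[0,1])\cup(-C)$ shows that ${\mathbb S}_{r,s,m,k}=C\cup_{f^k}(-C)$ is obtained from $S^4$ by cutting along the equatorial copy of $\partial C$ and regluing by $f^k$. As $f^k$ is the identity outside $N$, this changes $S^4$ only inside a neighborhood $T\times(-\epsilon,\epsilon)\times(-\delta,\delta)\cong T\times D^2$ of $T$ in $S^4$ (the $D^2$ factor being the product of the normal direction of $T$ in $\partial C$ with the doubling interval). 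Hence ${\mathbb S}_{r,s,m,k}=(S^4\setminus(T\times D^2))\cup_{\Psi}(T\times D^2)$ for a self-diffeomorphism $\Psi$ of $T\times\partial D^2$: it is a torus surgery on the embedded torus $T\subset S^4$, which has trivial normal bundle, and $\Psi$ fixes $H_1(T)$ while shearing the meridian by the amount produced by the $k$-fold torus twist.

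The content of the theorem is that this surgery is exactly a $1/s$-log transform. I would verify this by carrying out the handle calculus: start from Gompf's diagram for $C$, append the dual (upside-down) handles of $-C$, insert the $2$-handles realizing $f^k$, and then cancel and slide handles; in the untwisted case ($k=0$) everything cancels and one recovers $S^4$, while in the twisted case one should be left, after the appropriate moves, with the standard Kirby picture of a log transform with surgery coefficient $1/s$, the parameters $r$, $m$, $k$ having been absorbed into isotopies and blow-downs. The main obstacle is this last step: faithfully encoding the infinite-order twist $f^k$ in the handle calculus, and then performing enough handle moves both to exhibit the relevant embedded torus — so that \emph{log transform}, rather than a general torus surgery, genuinely applies — and to confirm that the surviving multiplicity is $1/s$, independent of the nonzero integer $k$, with $k$ (and $r$, $m$) affecting only which torus one transforms.
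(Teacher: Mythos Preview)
Your geometric setup in the first two paragraphs is sound and in fact proves something: since $f^k$ is a $k$-fold torus twist along $T\subset\partial C$ in a direction $\gamma\in H_1(T)$, regluing the double $DC=S^4$ by $f^k$ amounts to a torus surgery sending the meridian $\mu$ of $T$ to $\mu+k\gamma$. That is a $1/k$-log transform of $S^4$ along $T$. But the theorem asserts a $1/s$-log transform, and your torus cannot deliver this: the multiplicity produced by your construction is the power of $f$, not the framing parameter $s$ of $C(r,s;m)$. The hope in your last paragraph that handle moves will somehow transmute the coefficient from $1/k$ to $1/s$, while $k$ migrates into ``which torus one transforms'', is not supported by anything in the preceding two paragraphs, and you offer no mechanism for it.

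The paper's argument uses a different torus and a different key idea. Working in the explicit handle diagram of ${\mathbb S}_{r,s,m,k}$ obtained in Proposition~\ref{double}, one locates a $T^2\times D^2$ sub-handlebody on which a single $(1/1)$-log transform shifts the parameter $s$ by one; iterating, ${\mathbb S}_{r,s,m,k}$ is the $(1/s)$-log transform of ${\mathbb S}_{r,0,m,k}$. The step you are missing is the observation that $\kappa(r,0)$ is the unknot, so $C(r,0;m)$ is the standard $4$-ball and hence ${\mathbb S}_{r,0,m,k}\cong S^4$ for every $k$. This is precisely why the surviving multiplicity is $1/s$ and why $k$ (together with $r$ and $m$) influences only the torus embedding in $S^4$, not the surgery coefficient.
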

Note that exchanging the roles of $r$ and $s$, we also know that ${\mathbb S}_{r,s,m,k}$ is $(-1/r)$-log transform of $S^4$.
From the proof of Theorem \ref{double2} we get a handle decomposition of ${\mathbb S}_{r,s,m,k}$.
\begin{prop}
\label{handledecompositionofS4}
${\mathbb S}_{r,s,1,k}$ admits a handle decomposition with one $0$-handle, two 1-handles, four 2-handles, two 3-handles, and  one 4-handle.
\end{prop}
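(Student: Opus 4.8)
The plan is to read off the handle decomposition directly from the Kirby diagram constructed in the proof of Theorem~\ref{double2}; the only real task is to keep track of the handles at each stage and to observe that the moves used there do not change their number.

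First I would recall Gompf's handle presentation of the cork $C=C(r,s;m)$: it consists of one $0$-handle, two $1$-handles (two dotted unknotted circles), and two $2$-handles, where the integers $r,s,m$ enter only through the attaching curves and framings of the two $2$-handles; by Proposition~\ref{mod2prop} one may moreover take $m\in\{0,1\}$, which simplifies the pictures though it is irrelevant for the count. Next I would assemble $\mathbb S_{r,s,m,k}=C\cup_{f^{k}}(-C)$ by the standard doubling recipe: start with the handlebody $C$ above, and attach on top of $\partial C$ the handles of $-C$ turned upside down. Dualizing a $j$-handle produces a $(4-j)$-handle, so the inverted copy of $-C$ contributes two $2$-handles (dual to the $2$-handles of $C$, attached along the $f^{k}$-images of their belt circles), two $3$-handles (dual to the two $1$-handles of $C$), and one $4$-handle (dual to the $0$-handle). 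Since $f^{k}$ is a diffeomorphism of $\partial C$, it merely reroutes the attaching curves of the two upper $2$-handles and introduces no new handles. Concatenating the two halves yields one $0$-handle, two $1$-handles, $2+2=4$ two-handles, two $3$-handles, and one $4$-handle, which is exactly the asserted decomposition. As a consistency check, $1-2+4-2+1=2=\chi(S^{4})$.

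Finally I would verify that the Kirby-calculus steps used in the proof of Theorem~\ref{double2} to recognize $\mathbb S_{r,s,m,k}$ as the $(1/s)$-log transform of $S^{4}$ along an embedded torus are isotopies and handle slides, together with at most cancelling pairs created and removed in matching pairs, so that the handle count above is preserved throughout; the claim then follows at once. The main obstacle is precisely this bookkeeping: one must be certain that Gompf's cork really carries the $(1,2,2)$ handle structure used here, and that no hidden cancelling pair is quietly introduced or removed during the diagram manipulations of Theorem~\ref{double2}. Once that is pinned down the statement is immediate.
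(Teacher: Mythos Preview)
Your argument is correct and is essentially the paper's own: both obtain the count by viewing $\mathbb S_{r,s,m,k}=C\cup_{f^k}(-C)$ and using a $(1,2,2)$ handle structure on $C$, so that the turned-over $-C$ contributes the two extra $2$-handles, the two $3$-handles and the $4$-handle; the paper phrases this by recognizing, in the reduced diagram of Proposition~\ref{double} ({\sc Figure}~\ref{ZZk}), that the block consisting of $\alpha,\delta$, two $3$-handles and the $4$-handle is exactly $-C$. Two small remarks: your final paragraph about tracking the handle count through the log-transform moves of Theorem~\ref{double2}, and the appeal to Proposition~\ref{mod2prop}, are unnecessary---once you have the doubled $(1,2,4,2,1)$ decomposition the proposition is already proved---and you should be careful that the paper's own diagram of $C$ ({\sc Figure}~\ref{diagram}) carries more handles than $(1,2,2)$, so the $(1,2,2)$ presentation must be taken either from \cite{G3} or, as the paper does, deduced from the cancelled picture in {\sc Figure}~\ref{ZZk}.
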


If the embedding of the torus in $S^4$ extends to a fishtail neighborhood, then the log transform does not change the diffeomorphism type as discussed in \cite{G2}.
Other similar situations appear in \cite{T2} and \cite{T3}.
In \cite{T3}, it is shown that the knot-surgery of $S^4$ along a torus is trivial by using a 
fishtail neighborhood embedding in $S^4$. 
It, however, seems difficult to find a certain fishtail neighborhood in ${\mathbb S}_{r,s,m,k}$.
Distinguishing the differential structures of ${\mathbb S}_{r,s,m,k}$ might be a hard work.

${\mathbb S}_{r,s,m,k}$ is also considered as a 4-manifold obtained by log transforms along two tori in $S^4$ as written in Section~\ref{twistedsec}.
The tori have a symmetry, then we have ${\mathbb S}_{r,s,m,k}\cong {\mathbb S}_{-s,-r,m,k}$
due to the handle diagram in {\sc Figure}~\ref{ZZk}.

Furthermore, the $0$-log transform and $(0,0)$-log transform of $S^4$ with respect to the torus and the tori are homotopy $S^3\times S^1\#S^2\times S^2$ and $\#^2S^3\times S^1\#^2S^2\times S^2$ respectively.
By the similar argument to the proof of Theorem~\ref{mod2prop}, if $m$ is even, then ${\mathbb S}_{\infty,s,m,k}\cong S^3\times S^1\#S^2\times S^2$ and ${\mathbb S}_{\infty,\infty,m,k}\cong \#^2S^3\times S^1\#^2S^2\times S^2$ hold.
In the case where $m$ is odd, ${\mathbb S}_{\infty,s,m,k}$ and ${\mathbb S}_{\infty,\infty,m,k}$ are
the Gluck twists along the standard 4-manifolds.
\begin{que}
Are $S^4[0]$ and $S^4[0,0]$ diffeomorphic to the
standard $(S^3\times S^1)\#(S^2\times S^2)$ or $\#^2(S^3\times S^1)\#^2(S^2\times S^2)$ respectively?
\end{que}
A similar construction is a {\it Scharlemann manifold} in \cite{S}, which is a surgery of $\Sigma\times S^1$ for a rational homology sphere $\Sigma$.
The surgeries are done along normally generating loops of $\pi_1(\Sigma)$ in $\Sigma\times S^1$.
In the case where $\Sigma$ is a Dehn surgery of a knot and the loop is the meridian of the knot,
the manifold is equivalent to a knot-surgery of the double of the fishtail neighborhood. For example see \cite{T3}.
The general Scharlemann manifold gives a homotopy
$$\#(S^3\times S^1)\#^l(S^2\times S^2)\#^m({\mathbb C}P^2\#\overline{{\mathbb C}P^2}).$$
The case where $\Sigma=\Sigma(2,3,5)=S^3_{-1}(\text{left handed }3_1)$ and the loop is the meridian of the trefoil is the original one in \cite{S}.
The author in \cite{T3} proved that some Scharlemann manifolds are standard.
Here we give another question.
\begin{que}
Are $S^4[0]$ and $S^4[0,0]$ diffeomorphic to a Scharlemann manifold and a connected-sum of those manifolds respectively?
\end{que}
\section*{Acknowledgements}
This study here was inspired by \cite{G} and partially done during my stay at KIAS on March in 2016.
I am deeply grateful for the hospitality at the institute, and Kyungbae Park and Min Hoon Kim.
Kouichi Yasui gave me much advice and suggestions in the Hiroshima University Topology-Geometry seminar.
Also, Robert Gompf gave me much useful comments and helps for this earlier manuscript of this paper.
I thank them so much.

\section{Gompf's infinite order cork $C$.}
\subsection{Knot-surgery}
Before drawing Gompf's infinite order corks, we review the definition of Fintushel-Stern knot-surgery.
Let $K$ be a knot in $S^3$.
Let $X$ be a 4-manifold with a square zero embedded torus $T$.
Then the operation 
$$X_K=[X-\nu(T)]\cup [(S^3-\nu(K))\times S^1]$$
is called a {\it (Fintushel-Stern) knot-surgery} along $K$.
The gluing map is defined in \cite{FS}.
The notation $\nu(\cdot)$ stands for a tubular neighborhood of a submanifold.
Then the Seiberg-Witten invariant formula of knot-surgery is the following:
$$SW_{X_K}=SW_X\cdot\Delta_K(t),$$
where $\Delta_K(t)$ is the Alexander polynomial of $K$.

We remark Sunukjian's work in \cite{Su}.
The paper says that if a 4-manifold $X$ has a non-trivial Seiberg-Witten invariant, then 
two knot-surgeries $X_{K_1}$ and $X_{K_2}$ along knots $K_1$ and $K_2$ with $\Delta_{K_1}\neq \Delta_{K_2}$ cannot be diffeomorphic.
In short, Alexander polynomial distinguishes smooth structures of
knot surgeries.
\subsection{A handle diagram of $C$.}
In this section we describe the handle diagram of Gompf's infinite order cork $C$.
We call a handle diagram a {\it diagram} simply.
In \cite{G3} the diagram is described by the different method.
The manifold $C=C(r,s;m)$ can be built by attaching a single 2-handle to $I\times P$, where $P$ is the complement of $(r,s)$-double twist knot $\kappa(r,-s)$ as in {\sc Figure}~1 in \cite{G}.
\begin{lem}
The diagram of $C$ is {\sc Figure}~\ref{diagram} and the cork map $f$ is {\sc Figure}~\ref{diagram2}.
\end{lem}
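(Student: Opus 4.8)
The plan is to produce the diagram of $C$ directly from Gompf's description $C=(I\times P)\cup h_0$ with $P=S^3\setminus\nu(\kappa(r,-s))$, and then to read the cork map off the same picture, rather than guessing a picture and checking it. (One could instead start from Figure~\ref{diagram} and reverse-engineer the diffeomorphism with $(I\times P)\cup h_0$, but building the picture from the blocks is cleaner and makes the cork map easier to locate.)

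First I would fix an explicit handle decomposition of the $3$-manifold $P$. Since $\kappa(r,-s)$ is a double twist knot it is $2$-bridge, so from its genus-one / $2$-bridge position $P$ carries a handle decomposition with one $0$-handle, two $1$-handles and a single $2$-handle whose attaching circle runs over the two $1$-handles and records the parameters $r$ and $-s$ in its self-crossings. (Alternatively, $\chi(P)=0$ together with $H_1(P)\cong\Z$ and irreducibility pin down this handle pattern.) Crossing with $I$, each $k$-handle of $P$ becomes a $k$-handle of the $4$-manifold $I\times P$ with the same thickened attaching data, so $I\times P$ is presented by two dotted circles and one framed $2$-handle; along the way I would mark, inside $\partial(I\times P)$, the two boundary copies of $P$ and the collar torus $\partial P\times I$, since they support the cork map. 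Next I would identify $h_0$: contractibility of $C$ forces $h_0$ to be a $2$-handle, and $H_1(C)=0$ forces its attaching circle to generate $H_1(I\times P)\cong H_1(P)\cong\Z$, so we may take it to be a pushed-in copy of the meridian $\mu$ of $\kappa$ lying in one of the boundary copies of $P$, with framing the integer $m$ (matching Gompf's Figure~1). Adding it yields a provisional diagram with one $0$-handle, two $1$-handles and two $2$-handles; a short sequence of isotopies, handle slides and, if useful, a $1$/$2$-handle cancellation then brings it into the normal form of Figure~\ref{diagram}. Re-reading $\pi_1$ and $H_*$ off the final picture gives a consistency check that it is still contractible.

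For the cork map, recall that Gompf's $f$ is a self-diffeomorphism of $\partial C$ supported in a neighborhood of a torus inherited from $\partial(I\times P)$ (the collar torus above), given by a loop of diffeomorphisms of the $T^2$-fibre; equivalently it is the re-gluing whose effect on Fintushel-Stern knot-surgeries is passage from $X_{K_0}=X$ to $X_{K_1}$, so that $f^n$ produces $X_{K_n}$ for the $2n$-twist knot, as in Fact~\ref{Gompf}. The lemma asserts that, under the identification of $\partial C$ produced above, this is exactly the twist pictured in Figure~\ref{diagram2}. I would verify this by transporting the support torus (and the framed circle encircling the twist region) together with the twisting direction through the handle moves used to normalize the diagram; since Kirby moves do not carry boundary diffeomorphisms verbatim, the point is to record at each move where these curves land.

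I expect the principal obstacle to be exactly this last piece of bookkeeping: faithfully tracking $f$ through the simplification, and in particular reconciling the sign of the framing $m$ with the orientation of the twist, so that Figure~\ref{diagram2} depicts Gompf's cork map and not a conjugate or its inverse; getting this right is what guarantees compatibility with Fact~\ref{Gompf}. Writing the $2$-handle of $P$ explicitly in terms of $r$ and $s$ in the first step is fiddly but routine, and the intervening Kirby calculus is standard once the two $1$-handles are pinned down.
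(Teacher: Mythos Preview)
Your plan is viable but takes a genuinely different route from the paper. You propose to handle-decompose the knot complement $P$ directly from its $2$-bridge presentation and then thicken by $I$; the paper instead starts one level lower, with the $3$-manifold $\Sigma\times S^1$ ($\Sigma$ the punctured torus), realized as the solid torus $0$-surgered along the Bing double of its core. Crossing with $I$ gives a Kirby picture of $I\times\Sigma\times S^1$, and then attaching the $-r$- and $s$-framed $2$-handles while carving out the cores (together with the $m$-framed $h_0$) produces Figure~\ref{diagram} directly. The advantage of the paper's route is that the incompressible torus $\partial\Sigma\times S^1$ and its thickening $(I\times\partial\Sigma)\times S^1$ are part of the picture from the outset, so the cork map $f$---the Dehn twist on $I\times\partial\Sigma$ crossed with the identity on $S^1$---is read off without any of the bookkeeping you flag as the main obstacle. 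Your route, by contrast, naturally yields a smaller diagram (two dotted circles and two $2$-handles), whereas Figure~\ref{diagram} carries four $2$-handles inherited from the $\Sigma\times S^1$ block (these meridians are exactly what get tracked under $f^k$ in Proposition~\ref{double}); so to literally recover Figure~\ref{diagram} you would have to reintroduce canceling pairs and then locate the torus inside the expanded picture. Both arguments are correct, but the paper's choice of building block is what makes the later analysis of $f^k$ and the twisted doubles transparent.
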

\begin{figure}[htpb]
\begin{center}
\includegraphics{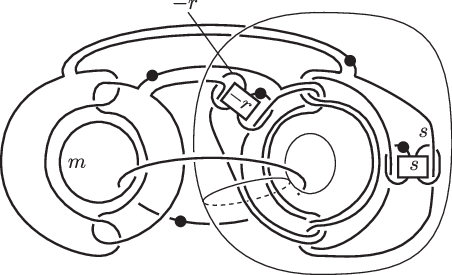}
\caption{The handle decomposition of $C(r,s;m)$.}
\label{diagram}
\end{center}
\end{figure}
\begin{figure}[htpb]
\begin{center}
\includegraphics[width=.9\textwidth]{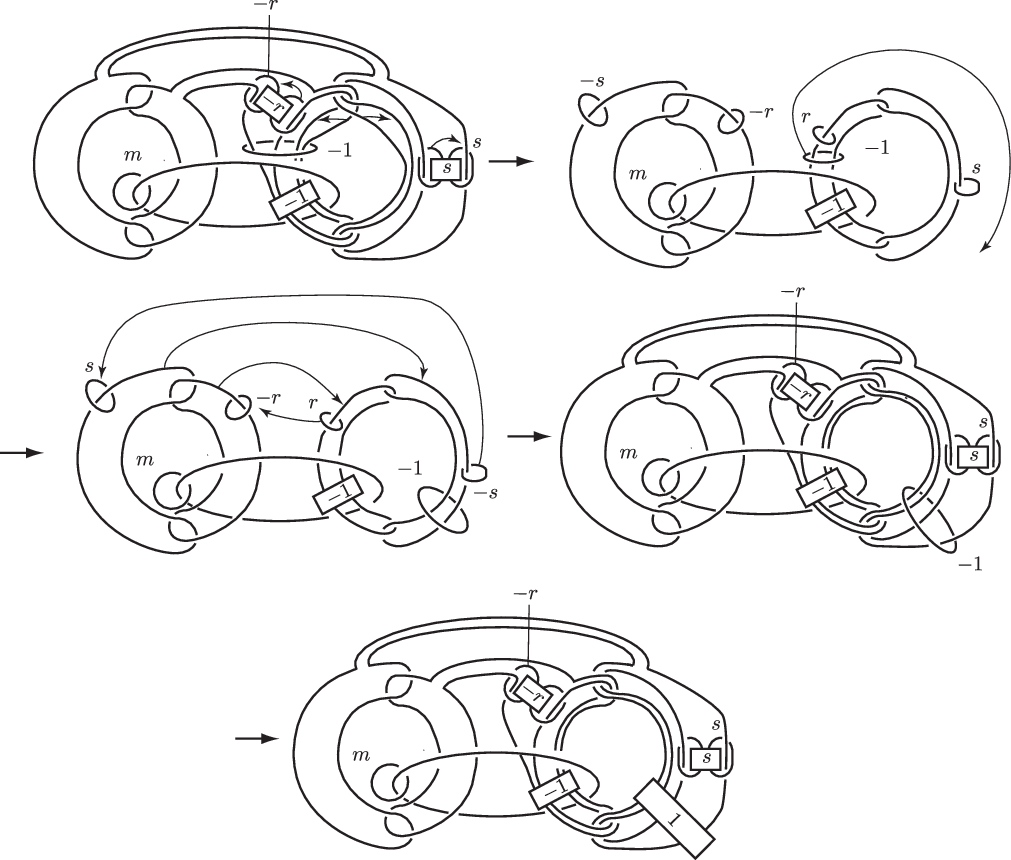}
\caption{The diffeomorphism $f$ on $\partial C(r,s;m)$.}
\label{diagram2}
\end{center}
\end{figure}

The boundary of Gompf's contractible 4-manifold $C$ has a torus decomposition along an incompressible torus.
The incompressible torus can be realized as a torus indicated in {\sc Figure}~\ref{diagram}.
Note that in the diagram in {\sc Figure}~\ref{diagram} the torus apparently cannot be realized as an embedded torus, because the torus meets the dotted 1-handles 4 times.
However, by inserting 2 pairs of canceling 2/3-handles, and putting the torus over the 2-handle, we can avoid the intersections.
As a result we can find our required embedded torus.

\begin{proof}
Let $\Sigma$ be a punctured torus.
$\Sigma\times S^1$ is diffeomorphic to the 0-surgered solid torus along the Bing double as in the left of {\sc Figure}~\ref{SigmaS}.
\begin{figure}[thpb]
\begin{center}
\includegraphics[width=.9\textwidth]{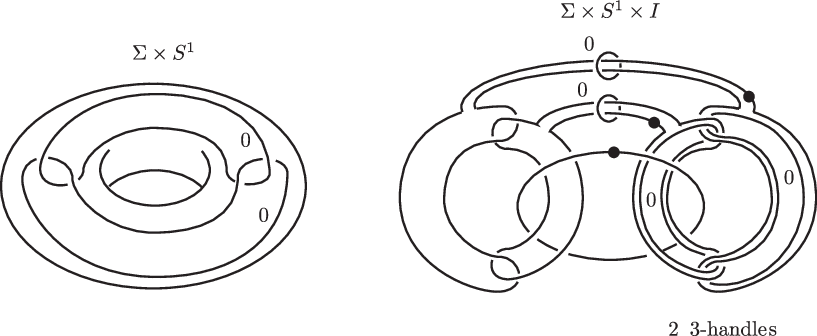}
\caption{$\Sigma\times S^1$ and $\Sigma\times S^1\times I$.}
\label{SigmaS}
\end{center}
\end{figure}

Thus by the fundamental method of handle calculus, the picture of the cylinder $I\times \Sigma\times S^1$ is the right diagram in {\sc Figure}~\ref{SigmaS}.
Attaching a $-r$-framed 2-handle (an $s$-framed 2-handle on another side respectively) and removing the union of the core and the attaching sphere cross interval $I$,
we obtain the other handle attachment and removal in {\sc Figure}~\ref{diagram}.
The two 0-framed 2-handles in the right of {\sc Figure}~\ref{SigmaS} are canceled out with the 3-handles when removing the cores.
The similar situation appears in \cite{T2}.

Hence, the handle diagram of $C$ is as in the picture in {\sc Figure}~\ref{diagram}.
Reducing the diagram, we get a ribbon 1-handle and $m$-framed 2-handle along the meridian of the ribbon knot.\qed
\end{proof}
The map $f$ is defined to be the right handed Dehn twist cross the identity on
$$(I\times \partial \Sigma)\times S^1.$$
This cork $C$ produces a knot-surgery $X_K$ for a 4-manifold $X$ for a square zero torus $T$.
The torus satisfies the following condition.

We assume $r=s=-m=1$.
Let $V$ be the neighborhood of the Kodaira's singular fibration of type III, which $V$ is also used in \cite{T4} in the same situation.
Suppose that $V$ is embedded in $X$.
$T$ is embedded as a general fiber of $V\subset X$.
Then there exists an embedding $C\subset V\subset X$ such that 
$$X_{K}=X(C,f^k),$$
where $K=\kappa(k,-1)$ is the $2k$-twist knot.

If $C$ does not satisfy $r=s=-m=1$, we can construct the knot surgery by the twist knot on an embedded torus under certain condition as mentioned in \cite{G}.

Due to \cite{G}, for any integer $k$ the $k$-fold composition $f^k$ cannot extend to the inside $C$ as any diffeomorphism.

We describe in {\sc Figure}~\ref{imagef} the local deformation of $X(C,f^2)$ according to the diffeomorphism $f$ in {\sc Figure}~\ref{diagram2}.
\begin{figure}[htpb]
\begin{center}
\includegraphics[width=.9\textwidth]{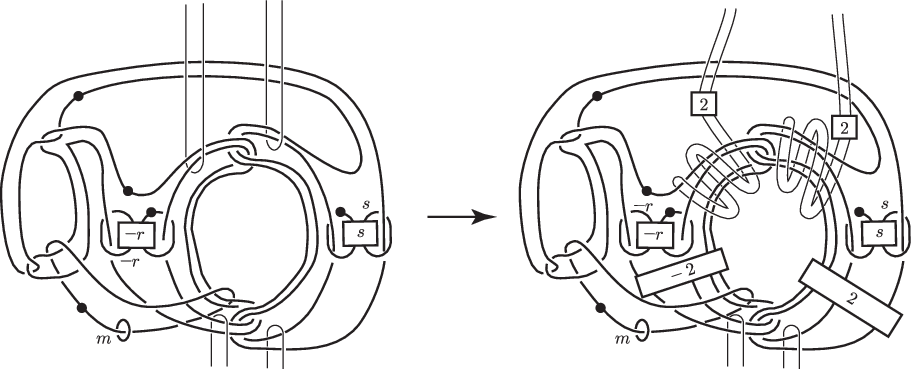}
\caption{The cut and paste of $C(r,s;m)$ by $f^2$.}
\label{imagef}
\end{center}
\end{figure}

\subsection{2-bridge knots $K_{m,n}$}
In the next section we prove Theorem~\ref{Z2}.
First we prepare a 2-bridge knot $K_{m,n}$ as in {\sc Figure}~\ref{2b} for integers $m,n$.
\begin{figure}[htbp]
\begin{center}
\includegraphics{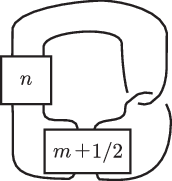}
\caption{$K_{m,n}$.}
\label{2b}
\end{center}
\end{figure}
The knot $K_{m,n}$ is classified as follows:
\begin{equation}
K_{m,n}=\begin{cases}T_{2,2m-1}&n=0\\\text{unknot} &(m,n)=(-1,-1)\\T_{2,-3}&(m,n)=(0,1)\\\text{non-torus 2-bridge knot}&\text{otherwise.}\end{cases}
\end{equation}
The following equality holds
$$K_{-1,n}\approx K_{0,n+1}.$$
Here $T_{p,q}$ is the right handed $(|p|,|q|)$-torus knot if $pq>0$ and is the left handed $(|p|,|q|)$-torus knot if $pq<0$.

Let $\Delta_{m,n}$ denote the Alexander polynomial $\Delta_{K_{m,n}}$.
The Alexander polynomial is computed as follows:
If $m\ge 1$, then
$$\Delta_{m,n}(t)\doteq n(t^{m+1}+t^{-m-1})-3n(t^m+t^{-m})+(4n+1)\sum_{i=-m+1}^{m-1}(-1)^{i-m+1}t^i$$
and if $m=0$, then 
$$\Delta_{0,n}(t)=n(t+t^{-1})-(2n-1).$$
This formula can be easily proven by using the skein relation of the Alexander polynomial.

We prove the following lemma:
\begin{lem}
If $m\ge1$, then the polynomials $\Delta_{m,n}(t)$ are distinct each other in ${\mathbb Z}[t,t^{-1}]/\pm t^{\pm1}$.
In particular, for $(m,n)\neq (m',n')$ with $m,m'\ge 1$, $K_{m,n}$ and $K_{m',n'}$ are non-isotopic as unoriented knots.
\end{lem}
\begin{proof}
Suppose that $\Delta_{m,n}=\Delta_{m',n'}$.
If $n\neq 0$ and $n'\neq0$ hold, then, comparing the coefficients of top degree terms, we have $n=n'$ and $m=m'$.
If either $n$ or $n'$ is 0 and the other is not 0, then the two polynomials do not agree.
Suppose that $n'\neq 0$ and $n=0$ hold.
Then we have $\Delta_{m,n}(t)=\Delta_{T_{2,2m-1}}(t)$.
Comparing the top degrees of $\Delta_{m,n}(t)$ and $\Delta_{m',n'}(t)$, we have $n'=\pm1$.
However, comparing the values of the second top degree of $\Delta_{m',n'}(t)$ 
we must have $n'=\pm3$.
This is contradiction.
If $n=n'=0$, then, comparing the top degrees of the two polynomials, we have $m=m'$.
\hfill$\Box$
\end{proof}

\subsection{Proof of Theorem \ref{Z2}.}
We remark that the natural number $k$ in this proof corresponds to $n$ in the statement.

Let $K(n_1,\cdots,n_k)$ be $K_{1,n_1}\#\cdots\#K_{k,n_k}$.
Hence $K(0,\cdots,0)=\#_{i=1}^kT_{2,2i-1}$ holds.
We embed $k$ copies of $\Sigma$ in the exterior $E$ of $K(0,\cdots,0)$ disjointly.
In the case of 2 copies see {\sc Figure}~\ref{disjointZZ}.
\begin{figure}[thpb]
\begin{center}
\includegraphics{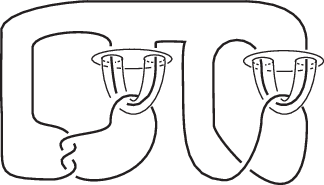}
\caption{A disjoint embedding of two copies of $\Sigma$ in $E_2$.}
\label{disjointZZ}
\end{center}
\end{figure}
Thus, disjoint $k$ copies of $I\times \Sigma\times S^1$ in $E\times S^1$ are also embedded.
By attaching $2k$ ($-1$-framed) 2-handles on the meridians of $E\times S^1$ and $k$ ($-1$-framed) 2-handles on the $S^1$-direction, we can embed $k$ copies of $C=C(1,1;-1)$ in $X_k:=E(k)_{\#_{i=1}^kT_{2,2i-1}}$, because it has $12k$ vanishing cycles.

We take each point in the complements of the incompressible tori in $\partial C\cup \partial C$
and connect the two components by a 1-handle attached on the neighborhoods of the two points in $X_k$, which the 1-handle is disjoint from $k$ $C$'s.
Embedding such $k-1$ 1-handles, we construct $\natural^kC\hookrightarrow X_k$.

Let $f_i$ be a diffeomorphism on $\partial(\natural^kC)$ which acts as Gompf's $f$ on the $i$-th component of $\#^k\partial C$ and acts as the identity on the other component of $\#^k\partial C$.
Since those points are taken in the complement of the incompressible tori,
the two maps $f_i$ and $f_j$ are commutative.
The twist $(\natural^kC, f_1^{n_1}\cdots f_k^{n_k})$ of $X_k$ produces $E(k)_{K(n_1,\cdots,n_k)}$.

Then, the computation of the Seiberg-Witten invariants is as follows:
$$SW_{E(k)_{K(n_1,\cdots,n_k)}}=SW_{E(k)}\prod_{i=1}^k\Delta_{i,n_i}$$
$$SW_{X_k}=SW_{E(k)}\prod_{i=1}^k\Delta_{i,0}.$$
Comparing the degrees of the two results, the two Seiberg-Witten invariants do not agree, unless $n_i=0$ for all $i$.
Since $X_k$ and $E(k)_{K(n_1,\cdots,n_k)}$ are exotic when $(n_1,\cdots,n_k)\neq (0,\cdots,0)$.
$(\natural^kC, f_1^{n_1}\cdots f_k^{n_k})$ gives an exotic $E(k)$.
Thus, $(\natural^kC, f_1^{n_1}\cdots f_k^{n_k})$ is a cork.
This means that $(\natural^kC, \{f_1^{n_1}\cdots f_k^{n_k}|n_j\in {\mathbb Z}\})$ is a ${\mathbb Z}^k$-cork.

To prove that this embedding is a ${\mathbb Z}^k$-effective embedding, we have only to show that if $\prod_{i=1}^k\Delta_{i,n_i}(t)=\prod_{i=1}^k\Delta_{i,n_i'}(t)$, then $(n_1,\cdots,n_k)=(n_1',\cdots,n_k')$.
\begin{claim}
\label{2.3}
Let $k, p$ be integers with $k>0$ and $p\ge 0$ and $n_i,n_i'(i=1,\cdots, k)$ integers.
If we have
\begin{equation}
\label{eq1}
\prod_{i=1}^{k-p}\Delta_{p+i,n_{p+i}}=\prod_{i=1}^{k-p}\Delta_{p+i,n_{p+i}'},
\end{equation}
then we have $n_{p+1}=n_{p+1}'$.
\end{claim}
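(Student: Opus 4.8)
The plan is to prove Claim~\ref{2.3} by induction on the number of factors $N:=k-p\ge 1$. The base case $N=1$ is the $m=m'$ instance of the Lemma just proved, namely that $n\mapsto\Delta_{m,n}$ is injective for each fixed $m\ge 1$. For the inductive step I will show that the factor of largest index, $\Delta_{k,n_k}$, is determined by the product in~(\ref{eq1}); this forces $n_k=n_k'$, and after cancelling $\Delta_{k,n_k}$ from both sides of~(\ref{eq1}) --- which is legitimate because $\Z[t,t^{-1}]$ is an integral domain --- the inductive hypothesis, applied to the remaining identity over the indices $p+1,\dots,k-1$, yields $n_{p+1}=n_{p+1}'$.

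To extract $\Delta_{k,n_k}$ I would argue as follows. The explicit formula for $\Delta_{m,n}$ shows that its top exponent is $m+1$ when $n\ne 0$ and $m-1$ when $n=0$; comparing the top exponents of the two sides of~(\ref{eq1}) therefore shows that the number of indices $m$ with $n_m=0$ equals the number with $n_m'=0$. Next, the leading coefficient of either product is $\prod_{n_m\ne 0}n_m$ (the factors with $n_m=0$ are $\Delta_{T_{2,2m-1}}$, monic), and a routine symmetric-function computation --- based on the fact that, for each factor, the ratio of the second coefficient to the leading one is $-3$ when $n_m\ne 0$ and $-1$ when $n_m=0$, with analogous statements for the next ratios --- shows that the successive coefficients of the product recover the power sums $\sum 1/n_m,\ \sum 1/n_m^2,\dots$ over the indices with $n_m\ne 0$, hence the entire multiset $\{\,n_m:n_m\ne 0\,\}$. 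It remains to recover the positions: a factor with $n_m=0$ is the torus-knot polynomial $\Delta_{T_{2,2m-1}}$, a product of cyclotomic polynomials whose divisors pin down $m$, whereas a short evaluation of the explicit formula at roots of unity shows that the factors with $n_m\ne 0$ contribute no cyclotomic divisor; this recovers the set of indices with $n_m=0$, after which comparison of the leading behaviour at each remaining index recovers the assignment $m\mapsto n_m$ there. In particular $n_k=n_k'$, closing the induction.

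The step I expect to be the main obstacle is this last one: upgrading the recovered \emph{multiset} of nonzero parameters to the \emph{index-indexed} sequence, i.e.\ showing that the product of Alexander polynomials remembers which parameter sits at which index rather than merely the collection of parameters. The mechanism is that the factors $\Delta_{m,\cdot}$ have pairwise distinct supports --- $\Delta_{m,n}$ occupies exponents $[-(m+1),m+1]$, or $[-(m-1),m-1]$ when $n=0$ --- so the lower-order coefficients of the product break the symmetry among the factors that the leading coefficients alone leave. Making this rigorous requires the casework indicated above, in particular the degenerate situations where several of the largest indices have $n_m=0$ (each dropping the top exponent by $2$, and with $\Delta_{1,0}=1$ disappearing altogether), which must be separated out by hand.
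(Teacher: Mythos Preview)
Your overall inductive framework (peel off one factor, cancel, recurse) is sound, but the approach diverges from the paper's and the heart of your argument --- the extraction of $n_k$ --- has a real gap.

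The paper does not go after the top factor $\Delta_{k,n_k}$; it extracts $n_{p+1}$ directly. The mechanism is that $\Delta_{p+1,\cdot}$ has the \emph{smallest} top exponent among the factors, so when you read the coefficients of the product from the top down, the first several (at degrees $d,d-2,\dots$) are expressible purely in the elementary symmetric polynomials $\sigma_{k-p},\sigma_{k-p-1},\dots$ of $n_{p+1},\dots,n_k$; but at depth $d-2p-3$ the third coefficient of $\Delta_{p+1,n_{p+1}}$ enters while the other factors are still contributing only their top two coefficients, and this asymmetry isolates the partial product $n_{p+2}\cdots n_k$. Together with $\sigma_{k-p}=n_{p+1}\cdots n_k$ this gives $n_{p+1}$. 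No multiset recovery, no cyclotomic divisibility, and no attempt to pin down the other $n_j$'s --- that is left to the iteration in the theorem.

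Your route instead tries to recover the whole assignment $m\mapsto n_m$ inside the claim, which is effectively proving the theorem rather than the claim. The multiset step via symmetric functions is plausible, but the position-recovery step is not an argument as written. The assertion that $\Delta_{m,n}$ with $n\ne0$ has no cyclotomic divisor is not a ``short evaluation at roots of unity'' for general $m$ --- you would need to check infinitely many cyclotomic polynomials against an infinite family of $\Delta_{m,n}$'s --- and even granting it, the sentence ``comparison of the leading behaviour at each remaining index recovers the assignment $m\mapsto n_m$ there'' is a restatement of the goal, not a mechanism. You correctly identify this as the main obstacle; it is precisely the missing idea. The paper sidesteps it entirely by exploiting the degree asymmetry at the \emph{bottom} index rather than the top.
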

If $\Delta_{p,q}$ were irreducible, then this claim would be easy.
However, since some 2-bridge knots are ribbon, 
such Alexander polynomials are not always irreducible.

\begin{proof}
By the induction of the number $k$ in (\ref{eq1}) we prove this claim.
Let $\sigma_i$ and $\sigma_i'$ be the $i$-th elementary symmetric polynomials in $n_{p+1},\cdots, n_{k}$ and $n_{p+1}',\cdots, n_{k}'$ respectively.
For example, we see 
$$\sigma_i=\sum_{\{\ell_1,\cdots, \ell_i\}\subset \{p+1,\cdots, k\},\#\{\ell_1,\cdots, \ell_i\}=i}n_{\ell_1}\cdots n_{\ell_i}.$$

Let $d$ be the degree of (\ref{eq1}).
Comparing the degree $d$ of (\ref{eq1}), we obtain 
$$\sigma_{k-p}=\sigma_{k-p}'.$$
Let $j_0$ be an integer with $1\le j_0\le p+1$.
Comparing the coefficients of the degree $d-2j$ of (\ref{eq1}) with $0\le j\le j_0$ we have $\sigma_{k-p-j_0}=\sigma_{k-p-j_0}'$.

Further, the coefficients with degree $d-2p-3$ of the left hand side of (\ref{eq1}) is 
\begin{eqnarray*}
&&S+(-3n_{p+1})\prod_{j=p+2}^{k-p}n_j+\sum_{j=p+2}^{k-p}(-4n_j-1)\prod_{\overset{\ell=p+1}{\ell\neq j}}^kn_\ell\\
&=&S_0-\sum_{j=p+2}^{k}\prod_{\overset{\ell=p+1}{\ell\neq j}}^kn_{\ell}=S_0-\sigma_{k-p-1}+n_{p+2}\cdots n_{k},
\end{eqnarray*}
where $S,S_0$ are polynomials generated by $\sigma_{k-p},\sigma_{k-p-1}\cdots,\sigma_{k-p-j}$.

Thus $n_{p+2}\cdots n_{k}=n_{p+2}'\cdots n_{k}'$ holds.
By using $\sigma_{k-p}=\sigma_{k-p}'$, we have $n_{p+1}=n_{p+1}'$.
\qed
\end{proof}
We go back to the proof of Theorem \ref{Z2}.
Suppose that $\prod_{i=1}^k\Delta_{i,n_i}=\prod_{i=1}^k\Delta_{i,n_i'}$.
Then, by using Claim \ref{2.3} in the case of $p=0$ we have $n_1=n_1'$.
By dividing $\Delta_{1,n_1}$ from the both side of this equality, we have $\prod_{i=2}^k\Delta_{i,n_i}=\prod_{i=2}^k\Delta_{i,n_i'}$.
Iterating this process by using Claim \ref{2.3}, we have $n_2=n_2'$, $\cdots$, $n_k=n_k'$ holds.

Thus $\prod_{i=1}^k\Delta_{i,n_i}=\prod_{i=1}^k\Delta_{i,n_i'}$ implies $(n_1,\cdots,n_k)=(n_1',\cdots, n_k')$.
Therefore we show that this embedding 
$$\natural^kC:=C_k\hookrightarrow X_k=E(k)_{\#^k_{i=1}T_{2,2i-1}}$$
is ${\mathbb Z}^k$-effective.
\qed

This proof focuses on the case of $C=C(1,1;-1)$.
However, Gompf's method in Remarks (a) in \cite{G} would change our examples to $C=C(r,s;m)$ with $r,s>0>m$.
\section{The twisted double ${\mathbb S}_{r,s,m,k}$ of $C$.}
\label{section3}
\subsection{The diagrams of twisted doubles.}
\label{twistdouble}
Let ${\mathbb S}_{r,s,m,k}$ denote the homotopy $S^4$ defined in Section~\ref{S4}.
\label{twistedsec}
We prove the following proposition first of all.
\begin{prop}
\label{double}
In the case of $k=1$, the diagram of ${\mathbb S}_{r,s,m,1}$ is {\sc Figure}~\ref{ZZ}.
The handle diagram of ${\mathbb S}_{r,s,m,k}$ is {\sc Figure}~\ref{ZZk}.
\end{prop}
\begin{proof}
The move of the meridians of 2-handles by $f$ is described in {\sc Figure}~\ref{movef}.
\begin{figure}[htpb]
\begin{center}
\includegraphics[width=.9\textwidth]{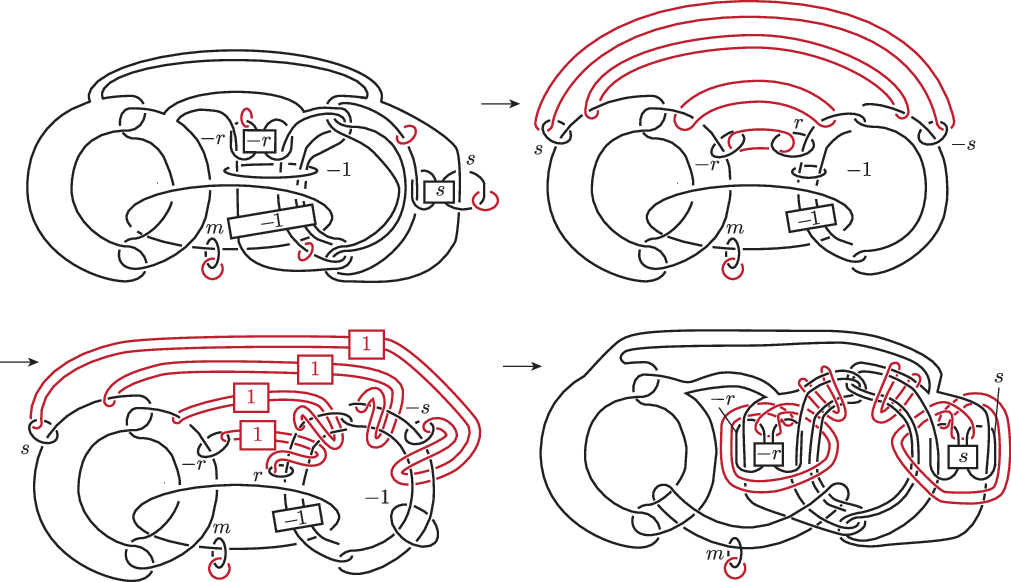}
\caption{The move of meridians of 2-handles by the $f$.}
\label{movef}
\end{center}
\end{figure}
Hence by $f$, the meridians of four 2-handles in {\sc Figure}~\ref{diagram} are moved to the link $\alpha,\beta, \gamma$ and $\delta$ in {\sc Figure}~\ref{ZZ}.

Hence, this diagram in {\sc Figure}~\ref{ZZ} is the $k=1$ case.
In the general $k$ case, the images of the meridians of the 2-handles by $f^k$ are $\alpha,\beta,\gamma$ and $\delta$ in {\sc Figure}~\ref{ZZk}.
Thus the first diagram in {\sc Figure} \ref{ZZk} describes ${\mathbb S}_{r,s,m,k}$.
The $\beta,\gamma$ and the meridian of the $m$-framed knot can be canceled with two 3-handles, because the calculation in {\sc Figure}~\ref{ul} proves that
the three components are the unlink.
\begin{figure}[htbp]
\begin{center}
\includegraphics[width=.9\textwidth]{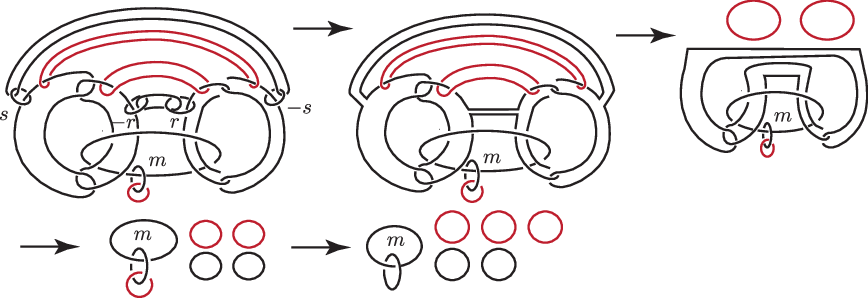}
\caption{This calculation proves that the red three components is the unlink.}
\label{ul}
\end{center}
\end{figure}
After the canceling, $\beta,\gamma$ and the meridian of the $m$-framed 2-handle are the $0$-framed unlink on the handle decomposition of the 2-handlebody.
Thus we cancel those together with three 3-handles.
Then we obtain the second picture as in {\sc Figure}~\ref{ZZk}.
\end{proof}

\begin{figure}[thpb]
\begin{center}
\includegraphics[width=.65\textwidth]{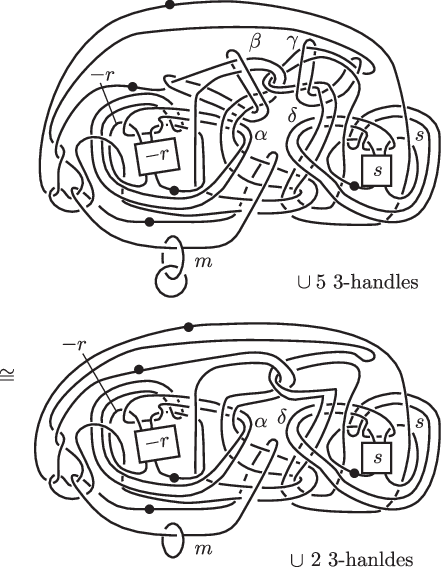}
\caption{Twisted double ${\mathbb S}_{r,s,m,1}$.}
\label{ZZ}
\end{center}
\end{figure}

\begin{figure}[thpb]
\begin{center}
\includegraphics[width=.8\textwidth]{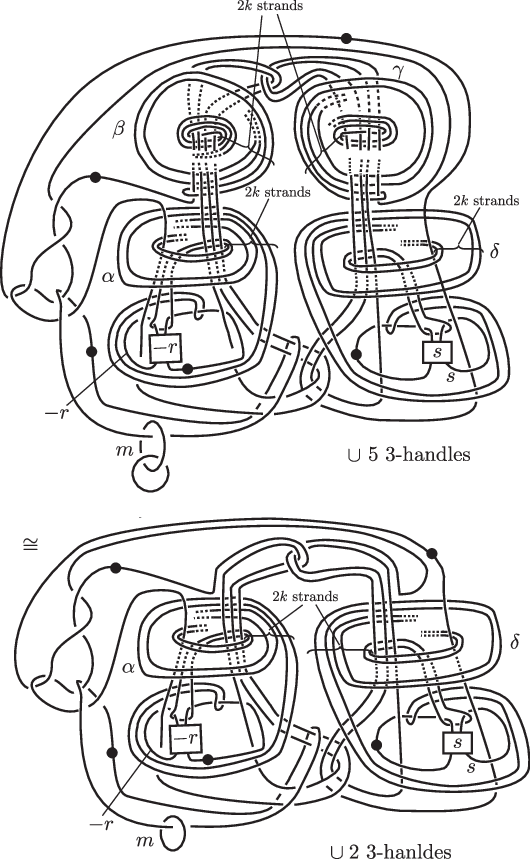}
\caption{Twisted double ${\mathbb S}_{r,s,m,k}$ and a diagram after canceling.}
\label{ZZk}
\end{center}
\end{figure}
\qed

{\bf Proof of Proposition \ref{handledecompositionofS4}.}
The union of 2-handles $\alpha$, $\delta$, two 3-handles and a 4-handle is diffeomorphic to $C$.
Hence ${\mathbb S}_{r,s,m,k}=C\cup_{f^k}(-C)$ admits two 1-handles and four 2-handles.
\hfill$\Box$

In particular, in {\sc Figure} \ref{ZZk}, the four 2-handles are $\alpha,\delta$, $s$-framed 2-handle and ($-r$)-framed 2-handle.

If one proves that ${\mathbb S}_{r,s,m,k}$ is standard, the first diagram in {\sc Figure} \ref{ZZk} would be useful as an auxiliary information of the handle decomposition.

Let $X$ be a 4-manifold.
We consider an embedded $S^2\hookrightarrow X$ with trivial normal bundle.
The homotopy types of self-diffeomorphisms on $S^2\times S^1$ is isomorphic to ${\mathbb Z}/2{\mathbb Z}$.
Let $\tau$ be a self-diffeomorphism over $S^2\times S^1$ with non-trivial homotopy class.
The surgery $X\leadsto X(S^2\times D^2,\tau)$ is called a {\it Gluck twist}.
\subsection{Proofs of Theorem \ref{mod2prop}.}
First we prove that ${\mathbb S}_{r,s,0,k}$ is diffeomorphic to $S^4$.
The deformation of the pictures in {\sc Figure}~\ref{standard} presents the ${\mathbb S}_{r,s,0,k}\cong S^4$.

In the diagram of ${\mathbb S}_{r,s,0,k}$ the union of the (lowest) $0$-framed 2-handle and a 4-handle consists of $S^2\times D^2$.
By removing the $S^2\times D^2$ and attaching an $m$-framed 2-handle and a 4-handle, we obtain the surgery $S^4={\mathbb S}_{r,s,0,k}\leadsto {\mathbb S}_{r,s,0,k}(S^2\times D^2,\tau^m)={\mathbb S}_{r,s,m,k}$.
Thus ${\mathbb S}_{r,s,m,k}$ is the Gluck surgery of $S^4$.
Hence, the diffeomorphism ${\mathbb S}_{r,s,m,k}\cong {\mathbb S}_{r,s,m-2,k}$ holds.
\hfill$\qed$\\
The last diffeomorphism can be also verified by the calculus in {\sc Figure} \ref{modmod}.
\begin{figure}[thpb]
\begin{center}
\includegraphics[width=.7\textwidth]{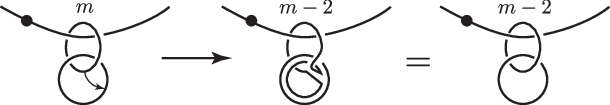}
\caption{The diffeomorphism ${\mathbb S}_{r,s,m,k}\cong {\mathbb S}_{r,s,m-2,k}$.}
\label{modmod}
\end{center}
\end{figure}

\subsection{The log transform along several tori.}
Here we define the log transform along $n$ tori with the square zero.
Let $e_i:T_i^2\hookrightarrow X$ be disjoint embedded tori each other that the squares are all zero for $i=1,\cdots n$.
Let $c_i$ be a curve presenting a primitive element in $H_1(T^2_i)$.
Let $p_i,q_i$ be several pairs of coprime integers.
Let $\tilde{e}_i$ be an embedding of the tubular neighborhood of $T^2$ with respect to $e_i$.
Suppose that a gluing diffeomorphism $g_{c_i,p_i,q_i}:T^2\times \partial D^2\to T_i^2\times \partial D^2$ a diffeomorphism satisfying 
$$\partial D^2\mapsto p_i\cdot \partial D^2+q_i\cdot c_i.$$
In fact, the image of $\partial D^2$ by the gluing map is the attaching sphere of the unique 2-handle in $T^2\times D^2$ and the remaining handles of $T^2\times D^2$ are two 3-handles and one 4-handle.
The information of the image of $\partial D^2$ determines a diffeomorphism type.
Hence, the diffeomorphism type of the log transform along the tori depends only on $e_i, c_i, p_i$ and $q_i$.

We denote the log transform by 
$$X[(e_i),(c_i),(p_i/q_i)].$$
and call {\it $(p_i/q_i)$-log transform along $(e_i)$ with direction $(c_i)$}.
When embeddings $(e_i)$ and curves $(c_i)$ are clear in the context, we omit these items.

\subsection{Proof of Theorem~\ref{double2}.}
First, we find $T^2\times D^2$ in the diagrams in {\sc Figure}~\ref{ZZ} (${\mathbb S}_{r,s,m,1}$), in general, {\sc Figure} \ref{ZZk} (${\mathbb S}_{r,s,m,k}$).
Removing handles, we can show that the submanifold as in {\sc Figure}~\ref{ZZ5} is embedded in ${\mathbb S}_{r,s,m,k}$.
It contains two disjoint $T^2$s in the last picture.
\begin{figure}[thpb]
\begin{center}
\includegraphics[width=.9\textwidth]{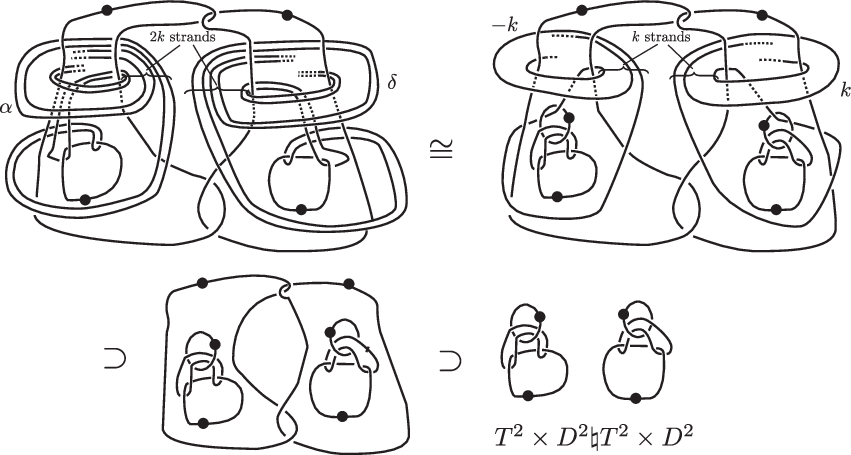}
\caption{Disjoint embedded two $T^2$'s in $S^4$.}
\label{ZZ5}
\end{center}
\end{figure}
%

One time $(1/1)$-log transform with direction $\ell$ corresponds to the change of the diagram which is given in {\sc Figure}~\ref{ZZ4}.
In general, the $(1/s)$-log transform is the $s$-times iteration of this process.
\begin{figure}[thpb]
\begin{center}
\includegraphics[width=.9\textwidth]{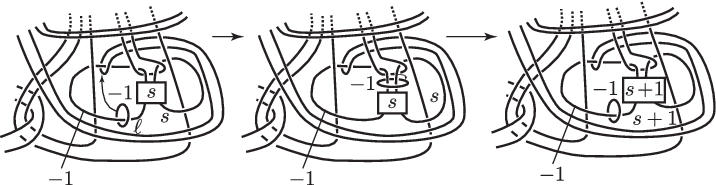}
\caption{The $(1/1)$-log transform with direction $\ell$.}
\label{ZZ4}
\end{center}
\end{figure}
Hence, ${\mathbb S}_{r,s,m,k}$ is the $(1/s)$-log transform of ${\mathbb S}_{r,0,m,k}$.
Since the knot $\kappa(r,0)$ is isotopic to the unknot, $C(r,0;m)$ is the standard 4-ball.
Thus ${\mathbb S}_{r,0,m,k}$ is diffeomorphic to $S^4$.

Therefore, ${\mathbb S}_{r,s,m,k}$ is a $(1/s)$-log transform along a torus embedded in $S^4$.
By exchanging the roles of $r$ and $s$, we can show that ${\mathbb S}_{r,s,m,k}$ is a $(-1/r)$-log transform along another torus in $S^4$.
\qed


Theorem \ref{double2} says that the twisted double ${\mathbb S}_{r,s,m,k}$ is obtained by two log transforms along two embedded disjoint tori in $S^4$ as in {\sc Figure}~\ref{ZZ5}.
Namely, we have
$${\mathbb S}_{r,s,m,k}=S^4[(e_{m,k,1},e_{m,k,2}),(c_1,c_2),(-1/r,1/s)]=S^4[(-1/r,1/s)].$$
The two torus embeddings $e_{m,k,i}:T^2_i\hookrightarrow S^4$ ($i=1,2$) are embedded in such a way that each torus is embedded in each component $T^2\times D^2$ in $\natural^2T^2\times D^2$.
The $T^2\times D^2\natural T^2\times D^2$ exterior in $S^4$ is described in {\sc Figure} \ref{ZZk2}.
\begin{figure}[thpb]
\begin{center}
\includegraphics[width=.65\textwidth]{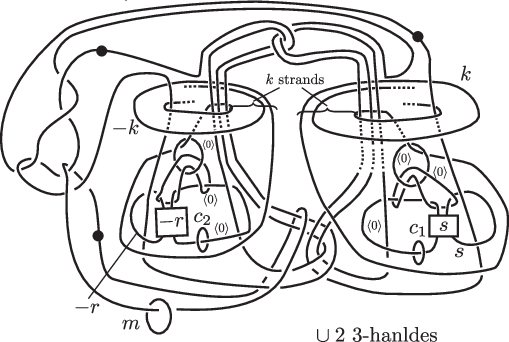}
\caption{The $\natural^2T^2\times D^2$ exterior in $S^4$ and curves $c_1,c_2$ on the boundary.}
\label{ZZk2}
\end{center}
\end{figure}

\subsection{A remark for the curves $c_1$ and $c_2$.}
As mentioned in Section \ref{S4}, if either of curves $c_1$ or $c_2$ in the boundary has an embedded slice disk in the exterior with framing $-1$.
However, it is difficult to find such a disk in terms of the following observation.

We consider a cobordism $\mathcal{C}$ from $\#^2T^2\times S^1$ to $\#^3S^2\times S^1$ by removing the three 3-handles and one 4-handle from the exterior which is described in {\sc Figure} \ref{ZZk2}.
We take an annulus in $\mathcal{C}$ beginning from either of $c_1$ or $c_2$.
Suppose that the annulus has no critical points in $\mathcal{C}$, i.e., the annulus is the trace by the gradient flow of the Morse function for the handle decomposition.
Let $\tilde{c}$ be the obtained knot in $\#^3S^2\times S^1$.
Turning the union of three 3-handles and a 4-handle by the upside down calculus, we obtain a knot description in {\sc Figure} \ref{isotopyc}.
Clearly, this knot $\tilde{c}$ has no $-1$-framed disk in $\natural^3D^3\times S^1$.
Because if there is such a disk, then by attaching a 2-handle on $\tilde{c}$ with $0$-framing, we must find a $(-1)$-sphere in the attached manifold whose intersection form is $\langle 0\rangle$.
This is a contradiction.

This means that the easy way cannot be found a $-1$-framed embedded disk in the exterior for $e_{m,k,i}$.
\begin{figure}[thpb]
\begin{center}
\includegraphics[width=.3\textwidth]{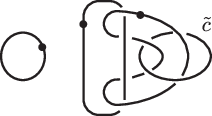}
\caption{The isotopy class of $c_1\subset \partial (\natural^3D^3\times S^1)$.}
\label{isotopyc}
\end{center}
\end{figure}
\subsection{The cases of $r$ or $s=\infty$.}

The cases where either $r$ or $s$ is $\infty$ can be regarded as $0$-log transforms along the tori from the equality ${\mathbb S}_{r,s,m,k}=S^4[-1/r,1/s]$.
Namely, ${\mathbb S}_{\infty ,s,m,k}=S^4[0,1/s]$ and ${\mathbb S}_{\infty,\infty,m,k}=S^4[0,0]$.

${\mathbb S}_{\infty,\infty,m,k}$ is obtained by exchanging two dots and two 0's in the sub-handle for $\natural^2T^2\times D^2$.
The diagram of ${\mathbb S}_{\infty,\infty,m,k}$ is described in {\sc Figure}~\ref{std}.
By computing the fundamental groups and homology groups, the manifolds ${\mathbb S}_{\infty,0,m,k}$ and ${\mathbb S}_{\infty,\infty,m,k}$ are homotopic to 
$$(S^3\times S^1)\#(S^2\times S^2)\text{ and }\#^2(S^3\times S^1)\#^2(S^2\times S^2)$$
respectively.
In the case of $m\equiv 0\bmod 2$, by using the same move as {\sc Figure}~\ref{standard},
we have ${\mathbb S}_{\infty,s,m,k}=(S^3\times S^1)\#(S^2\times S^2)$
and ${\mathbb S}_{\infty,\infty,m,k}=\#^2(S^3\times S^1)\#^2(S^2\times S^2)$.
Thus essentially, we have to investigate the manifold ${\mathbb S}_{\infty,s,1,k}$ and ${\mathbb S}_{\infty,\infty,1,k}$.
These are the Gluck twists of standard $(S^3\times S^1)\#(S^2\times S^2)$ or $\#^2(S^3\times S^1)\#^2(S^2\times S^2)$.

In \cite{T3} some homotopy $(S^3\times S^1)\#(S^2\times S^2)$'s are constructed according to Scharlemann's method.
Some of those are diffeomorphic to the standard manifold.
What one knows the relationship between these manifolds would be an interesting problem.
\begin{figure}[thpb]
\begin{center}
\includegraphics[width=.9\textwidth]{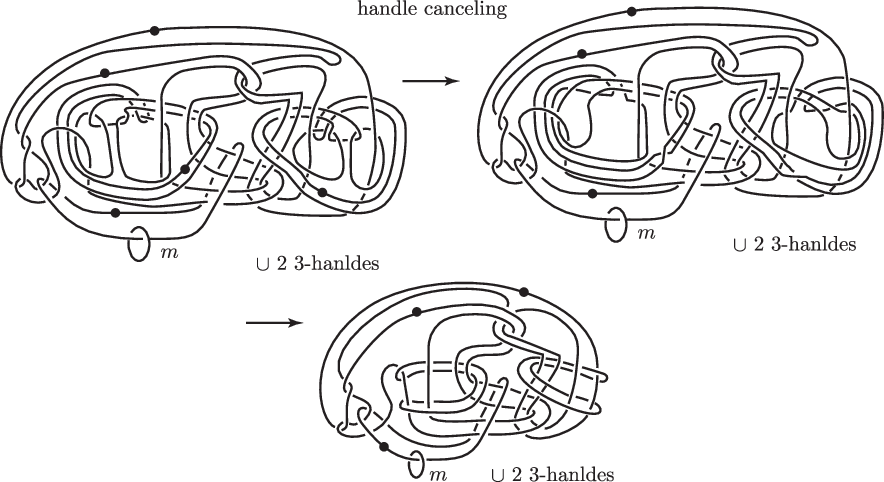}
\caption{The handle diagram of $S^4[0,0]$.}
\label{std}
\end{center}
\end{figure}

  \begin{figure}[thpb]
\begin{center}
\includegraphics[width=.65\textwidth]{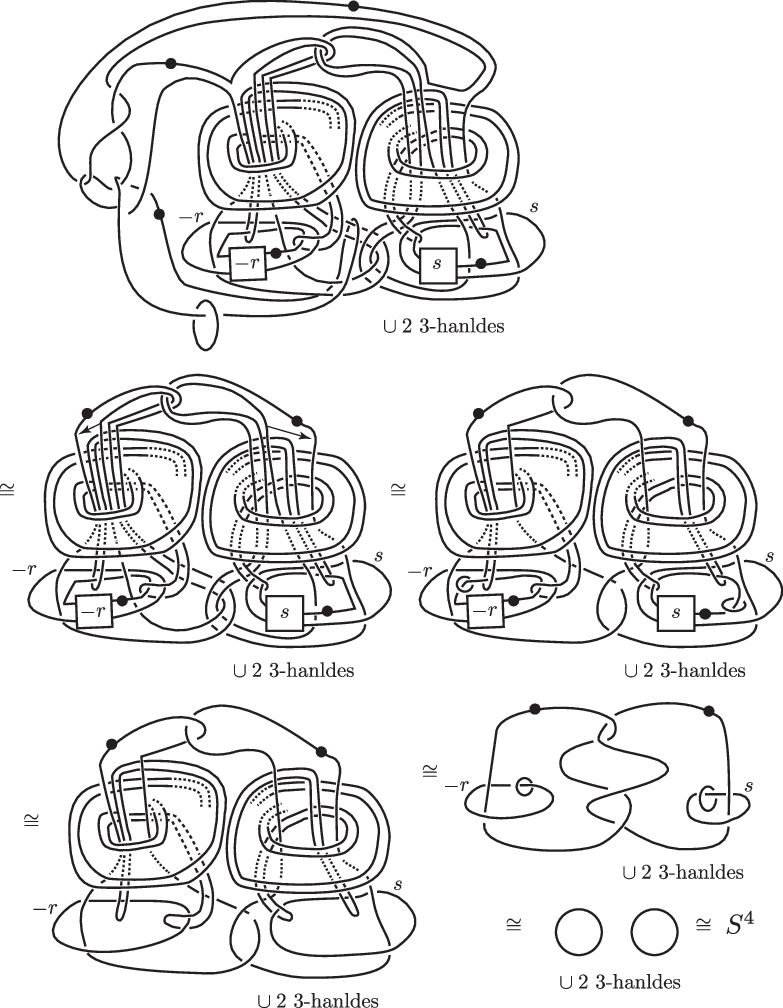}
\caption{${\mathbb S}_{r,s,0,k}$ is the standard $S^4$.}
\label{standard}
\end{center}
\end{figure}


\begin{thebibliography}{10}
\bibitem{AKMR} D. Auckly, H. Kim, P. Melvin, and D. Ruberman, {\it Equivariant corks,} Algeb. Geom. Topol. 17 (2017) 1771--1783.  
\bibitem{CFHS} C. L. Curtis; M. H. Freedman;  W. C.Hsiang; R. Stong, {\it A decomposition theorem for h-cobordant smooth simply-connected compact 4-manifolds}, Invent. Math. {\bf 123} (1996), no. 2, 343--348. 
\bibitem{FS} R. Fintushel and R. Stern, {\it Knots, links and 4-manifolds}, Invent. Math. 134 (1998), 363--400.
\bibitem{G} R. Gompf, {\it Infinite order corks}, Geom. Topol. Volume 21, Number 4 (2017), 2475--2484.
\bibitem{G2} R. Gompf, {\it More Cappell-Shaneson spheres are standard}, Algebr. Geom. Topol.10 (2010),  no. 3, 1665--1681.
\bibitem{G3} R. Gompf, {\it Infinite order corks via handle diagrams}, Algebr. Geom. Topol. Volume 17, Number 5 (2017), 2863--2891.
\bibitem{Mat} R. Matveyev, {\it A decomposition of smooth simply-connected h-cobordant 4-manifolds}, J. Diff. Geom. Vol. 44 (1996) 571--582.
\bibitem{S} M. Scharlemann, {\it Constructing strange manifolds with dodecahedral space}, Duke Math. J 43(1976) 33--40.
\bibitem{Su} N. S. Sunukjian, {\it  A note on knot surgery.} J. Knot Theory Ramifications 24 (2015), no. 9, 1520003, 5 pp.
\bibitem{T} M. Tange, {\it Finite order corks}, Internat. J. Math. 28 (2017), no. 6, 1750034, 26 pp.
\bibitem{T5} M. Tange, {\it Non-existence theorems on infinite order corks}, arXiv:1609.04344.
\bibitem{T2} M. Tange, {\it On Nash's 4-sphere and Property 2R}, Turkish J. Math. 37 (2013), no. 2, 360-374. 
\bibitem{T3} M. Tange, {\it The link surgery of $S^2\times S^2$ and Scharlemann's manifolds}, Hiroshima Math. J.  44  (2014),  no. 1, 35--62. 
\bibitem{T4} M. Tange, {\it A plug with infinite order and some exotic 4-manifolds}, Journal of G\"okova Geometry Topology - Volume 9 (2015) 1--17.
\end{thebibliography}
\end{document}